\documentclass[11pt]{article}

\usepackage[margin=1in]{geometry}

\usepackage[colorlinks=true,linkcolor=blue,citecolor=blue,urlcolor=blue]{hyperref}

\usepackage{enumitem}
\usepackage{amsmath,amssymb,amsthm,mathtools}

\usepackage[nameinlink,capitalize]{cleveref}
\usepackage{algorithm}
\usepackage[noend]{algpseudocode}
\usepackage{float}
\usepackage{placeins}
\usepackage{xcolor}

\usepackage{aliascnt}
\usepackage[nameinlink,capitalize]{cleveref}
\newtheorem{theorem}{Theorem}
\newaliascnt{lemma}{theorem}
\newtheorem{lemma}[lemma]{Lemma}
\aliascntresetthe{lemma}
\newaliascnt{corollary}{theorem}
\newtheorem{corollary}[corollary]{Corollary}
\aliascntresetthe{corollary}
\newaliascnt{remark}{theorem}
\newtheorem{remark}[remark]{Remark}
\aliascntresetthe{remark}
\newaliascnt{definition}{theorem}
\newtheorem{definition}[definition]{Definition}
\aliascntresetthe{definition}
\newaliascnt{assumption}{theorem}
\newtheorem{assumption}[assumption]{Assumption}
\aliascntresetthe{assumption}
\newcommand{\E}{\mathbb E}
\newcommand{\Prb}{\mathbb P}

\newcommand{\calB}{\mathcal B}
\newcommand{\calP}{\mathcal P}

\newcommand{\cA}{\mathcal A}
\newcommand{\polylog}{\operatorname{polylog}}
\newcommand{\vbl}{\operatorname{vbl}}

\title{Streaming Hypergraph Coloring via Palette Sparsification}
\author{
  Artur Czumaj\thanks{Department of Computer Science and DIMAP, University of Warwick. Email: \texttt{A.Czumaj@warwick.ac.uk}.}
  \and
  Pan Peng\thanks{School of Computer Science and Technology, University of Science and Technology of China. Email: \texttt{ppeng@ustc.edu.cn}.}
  \and
  Ruizhe Shi\thanks{Paul G. Allen School of Computer Science \& Engineering, University of Washington. Email: \texttt{zhezi@cs.washington.edu}.}
  \and
  Christian Sohler\thanks{Department of Mathematics and Computer Science, University of Cologne. Email: \texttt{csohler@uni-koeln.de}.}
}
\date{}

\begin{document}
\hypersetup{pageanchor=false}
\maketitle

\begin{abstract}
For every fixed $k\ge2$, we give a randomized one-pass insertion-only algorithm that colors an $n$-vertex $k$-uniform hypergraph of maximum degree $\Delta$ with $O(\Delta^{1/(k-1)})$ colors using $\widetilde O_k(n)$ bits of working memory. 
As a graph-theoretic result of independent interest, we also prove a tight palette-sparsification theorem for general uniform hypergraphs.
Independently sampled lists of $\Theta(\sqrt{\log n})$ colors from a palette of size $O(\Delta^{1/(k-1)})$ preserve colorability with high probability; the list-size dependence is asymptotically optimal. These results extend to bounded-rank hypergraphs. 

We complement the algorithm with a deterministic lower bound: for every fixed polylogarithmic semi-streaming space bound, there are polylogarithmic values of $\Delta$ for which any deterministic one-pass algorithm requires $\exp(\Delta^{\Omega(1)})$ colors. 
\end{abstract}

\thispagestyle{empty}

\newpage
\hypersetup{pageanchor=true}
\pagenumbering{arabic}

\section{Introduction}
\label{sec:intro}

A hypergraph \(H=(V,E)\) is \emph{\(k\)-uniform} if every edge contains exactly \(k\) vertices. A proper coloring assigns colors to the vertices so that no edge is monochromatic. The degree of a vertex is the number of edges containing it, and \(\Delta(H)\) denotes the maximum degree. The classical Lov\'asz Local Lemma (LLL) implies that \(H\) has a proper coloring with \(O_k(\Delta(H)^{1/(k-1)})\) colors~\cite{erdos-lovasz-lll,alon-spencer}. Specifically, color each vertex independently and uniformly from \(Q\) colors. A fixed edge is monochromatic w.p. \(Q^{-(k-1)}\), and this event is independent of the colors on all disjoint edges. Each edge intersects at most \(k(\Delta(H)-1)\) other edges, so the LLL applies when \(Q\) is a sufficiently large constant multiple of \(\Delta(H)^{1/(k-1)}\).

This dependence on the maximum degree is optimal up to constant factors. Indeed, the complete \(k\)-uniform hypergraph \(K_m^{(k)}\) has maximum degree \(\binom{m-1}{k-1}=\Theta_k(m^{k-1})\). Every color class in a proper coloring has at most \(k-1\) vertices, so its chromatic number is \(\lceil m/(k-1)\rceil=\Theta_k(m)\).

Our main question is whether we can achieve the same asymptotic number of colors while seeing the edges only once and using much less space than would be needed to store the whole hypergraph. In the \emph{insertion-only streaming model}, edges arrive one at a time and are never deleted; a one-pass algorithm cannot revisit an edge unless it has stored it. We seek a \emph{semi-streaming} algorithm, whose memory is at most \(n\) times a fixed power of \(\log n\), with constants allowed to depend on the fixed value of \(k\). 
We assume that no edge appears twice and that the stream order is fixed independently of the algorithm's random choices. Our main result answers the question affirmatively.

\begin{theorem}[Streaming coloring for uniform hypergraphs]
\label{thm:uniform}
For a sufficiently large absolute constant \(A>0\) and every \(k\ge2\), there is a randomized one-pass algorithm with the following guarantee. Given integers \(n\ge2\) and \(1\le\Delta\le\max\{1,\binom{n-1}{k-1}\}\), and a stream in which every edge appears once, describing an \(n\)-vertex \(k\)-uniform hypergraph \(H\) of maximum degree at most \(\Delta\), the algorithm w.h.p. outputs a proper coloring with
\[
    Q=\left\lceil A\Delta^{1/(k-1)}\right\rceil
\]
colors.  It uses \(\widetilde O_k(n)\) bits of working memory and has expected polynomial postprocessing time.
\end{theorem}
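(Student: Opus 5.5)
The plan is to reduce the streaming problem to a palette-sparsification statement, and then prove that statement with the local lemma. Before the stream begins, every vertex $v$ independently samples a list $L(v)\subseteq[Q]$ of $s=\Theta(\sqrt{\log n})$ colors, chosen uniformly at random. This takes $O(n\sqrt{\log n}\log Q)$ bits, and $\log Q=O(\log n)$ because $\Delta\le\binom{n-1}{k-1}$. While the stream is read, an edge $e$ is stored only if it is \emph{dangerous}, that is, $\bigcap_{v\in e}L(v)\ne\emptyset$. Only dangerous edges can become monochromatic under a coloring that respects the lists, so at the end it suffices to find a proper list coloring of the stored sub-hypergraph $H'$. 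Since the stream order does not depend on the random lists, the events ``$e$ is dangerous'' are determined by the lists alone. The proof therefore has three parts: bound the number of stored edges, show that an $L$-coloring of $H$ exists w.h.p., and find one in expected polynomial time.

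\textbf{Space.} A fixed edge is dangerous with probability at most $s^k/Q^{k-1}$: for each color $c$, $\Pr[c\in L(v)]=s/Q$, and we take a union bound over the $Q$ colors. The expected number of stored edges is therefore at most $|E|\cdot s^k/Q^{k-1}\le (n\Delta/k)\cdot s^k/(A^{k-1}\Delta)=O_k(n\, s^k)=O_k(n(\log n)^{k/2})$. Each stored edge costs $O(k\log n)$ bits, so the total is $\widetilde O_k(n)$. To get a worst-case rather than expected guarantee, I would prove a concentration bound in which the randomness is grouped per vertex: the number of dangerous edges at a vertex is controlled through its list with a Chernoff bound or bounded differences. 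Alternatively, the algorithm can simply abort once the count exceeds a constant times its expectation, which changes the failure probability only slightly.

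\textbf{Colorability (main obstacle).} The core is to show that w.h.p. over the lists there is a proper coloring with $\phi(v)\in L(v)$ for all $v$. I would choose $\phi(v)$ uniformly from $L(v)$ and apply the LLL with bad events $B_e=$ ``$e$ is monochromatic''. Conditioned on the lists, $\Pr[B_e]=\sum_c\prod_{v\in e}\mathbf 1[c\in L(v)]/s$. A single edge is easy to handle, but the lists are shared, so the dependency and probability bounds have to hold \emph{simultaneously} for all edges after the lists are fixed. The estimate needed is that for each vertex $v$ and each color $c\in L(v)$, the number of edges $e\ni v$ whose other vertices all contain $c$ in their lists is at most about $\Delta(s/Q)^{k-1}\cdot\mathrm{polylog}$. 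With $Q\ge A\Delta^{1/(k-1)}$ this is $O(s^{k-1}/A^{k-1})$ in expectation, and it must be concentrated up to a $\log n$ deviation; this is exactly what forces the threshold $s=\Theta(\sqrt{\log n})$ (for $k=2$ it matches the known sparsification regime). Given this estimate, I would run the asymmetric LLL, or better a weighted local-lemma or entropy-compression argument, with per-vertex bad events ``$v$ and some edge through $v$ are monochromatic in color $\phi(v)$''. Each such event has probability $O(s^{k-2}/A^{k-1}\cdot\mathrm{polylog}/s^{k-1})$ and depends on $\mathrm{poly}(s)$ other events. Making this bookkeeping close with only $\sqrt{\log n}$ colors per list is where I expect the real difficulty. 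If the direct estimate is too lossy, my first fallback is to split each list into $O(1)$ rounds in the style of a nibble, or to use the deterministic-degree reduction that a $k$-wise intersection of lists behaves like an independent Bernoulli$(s^k/Q^{k-1})$ sparsifier, and then apply the LLL to the resulting low-degree hypergraph $H'$, whose maximum degree is $O(s^{k}/A^{k-1}+\log n)$.

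\textbf{Postprocessing.} Once the LLL conditions are verified for $H'$ with its lists, the Moser--Tardos algorithm finds an $L$-coloring in expected time polynomial in $|H'|$. The coloring is proper for $H$ because every non-stored edge has disjoint list intersection and so cannot be monochromatic. Taking a union bound over the failure probabilities of the space and colorability steps completes the argument.
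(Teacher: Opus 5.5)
Your storage rule is the right first ingredient: the paper keeps exactly the pairs $(e,c)$ with $c\in L(e)$, and your expected-space count is the one in \Cref{lem:space}. The colorability step, however, has a genuine gap, and it is precisely the difficulty this theorem addresses.

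The estimate you rely on says that for every $v$ and $c\in L(v)$, only about $\Delta(s/Q)^{k-1}\polylog n$ edges through $v$ have $c$ in all their other lists. This holds for graphs and linear hypergraphs but fails for general ones, because edges sharing a core of two or more vertices become dangerous together. For $k=3$ and, say, $\Delta=\Theta(\sqrt n)$, take $\Theta(n)$ disjoint pairs $\{u_i,v_i\}$, each in $\Delta$ edges $\{u_i,v_i,w\}$, reusing the $w$'s so all degrees stay at most $\Delta$. Each pair shares a list color with probability about $s^2/Q$, so w.h.p.\ many pairs do. For such a pair, about $\Delta s/Q=\Theta(Qs/A^2)$ edges through $v_i$ are dangerous in the shared color. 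That is polynomially more than $\Delta(s/Q)^2=O(s^2)$, so $H'$ also has polynomially large maximum degree and your fallback fails.

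The local lemma itself breaks, even in lopsided form. Suppose $v$ lies in $\Delta/2$ edges through each of two partners $u_1,u_2$, and shares $c_1$ with $u_1$ and $c_2\neq c_1$ with $u_2$; this again happens w.h.p.\ for some such gadget. Then there are $M,M'=\Theta(Qs)$ monochromatic-edge events of probability $p=s^{-3}$ in colors $c_1$ and $c_2$, and every event of the first kind conflicts at $v$ with every event of the second. Summing the condition over each side gives $Mp\le Xe^{-X'}$ and $M'p\le X'e^{-X}$ for the total weights $X,X'$. Hence $MM'p^2\le e^{-2}$, which is false once $Q\gg s^2$. Yet forbidding $u_1,v$ from both taking $c_1$, and $u_2,v$ from both taking $c_2$, removes all these events at once. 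What is needed is different constraints, not finer bookkeeping.

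That replacement, done systematically, is the missing idea. Whenever a core $S\subsetneq e$ with $|S|\ge2$ has $d_H(S)\ge Q^{k-|S|}$, the constraints of edges through $S$ are replaced by $B_{S,c}$ for $c\in L(S)$, keeping in each color only inclusion-minimal sets. An $L$-coloring of your $H'$ does exist w.h.p.\ by \Cref{thm:palette_sparsification}, but that proof and the paper's polynomial-time postprocessing both go through this compression.

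In one pass, your stored edges are not enough to decide which cores are heavy. At the threshold, only about $s^{k-|S|}$ edges through $S$ are dangerous in a given common color. For $|S|=k-1$ that is $\Theta(\sqrt{\log n})$ samples, too few for a union bound and entangled with the lists. The paper therefore proceeds as follows:
\begin{itemize}
\item It also counts each occurrence of an active core independently with probability $R_k\log n/Q^{k-|S|}$, which detects heavy cores up to a factor $4$ (\Cref{lem:active-core-detection}).
\item It dominates the kept constraints at $(v,c)$ by edges of auxiliary hypergraphs $F_{s,v}$. These depend only on $H$ and satisfy $d_{F_{s,v}}(T)\le\Gamma D^{s-1-|T|}$ (\Cref{lem:streaming-residual-hierarchy}).
\item This hierarchy drives a high-moment bound giving overload probability $e^{-\Omega(\ell)}$ per color. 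By negative association, w.h.p.\ at most half of every list is overloaded.
\item After discarding those colors, the lopsided LLL, in which same-color events do not conflict, finishes the argument.
\end{itemize}

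Finally, a point about space. Aborting when memory exceeds a constant times its expectation succeeds only with constant probability, so you need $O(\log n)$ independent capped copies. A per-vertex Chernoff or bounded-differences bound on the number of dangerous edges does not follow directly, for the same correlation reason: one vertex's list can change the count by $\Delta$.
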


Thus the one-pass space restriction costs no asymptotic increase in the number of colors. The same conclusion holds when edge sizes vary over a fixed bounded range. 

\begin{corollary}[Streaming coloring for bounded-rank hypergraphs]
\label{cor:bounded-rank}
Fix integers \(2\le r\le k\). Given integers \(n\ge2\) and
\(
1\le \Delta\le \max\left\{1,\sum_{q=r}^{k}\binom{n-1}{q-1}\right\},
\)
and a duplicate-free insertion stream describing an \(n\)-vertex hypergraph \(H\) of maximum degree at most \(\Delta\), where every edge size belongs to \(\{r,r+1,\ldots,k\}\), there is a randomized one-pass algorithm that w.h.p. outputs a proper coloring with
\[
O_{r,k}\!\left(\Delta^{1/(r-1)}\right)
\]
colors. It uses \(\widetilde O_{r,k}(n)\) bits of working memory and has expected polynomial postprocessing time.
\end{corollary}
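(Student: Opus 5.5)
The plan is to reduce \cref{cor:bounded-rank} to \cref{thm:uniform} by truncating edges, without changing the algorithm's internals. Each edge of size \(q\in\{r,\ldots,k\}\) is replaced, on arrival, by a canonical \(r\)-subset, namely its \(r\) smallest vertices under a fixed vertex order. Call the result the truncated stream. A proper coloring of the truncated \(r\)-uniform hypergraph \(H'\) is proper for \(H\): if an edge \(e\) of \(H\) were monochromatic, then so would be its subset \(e'\), which is an edge of \(H'\). There are two problems. First, different edges may truncate to the same \(r\)-set, so the stream can contain duplicates. Second, each vertex degree does not increase under truncation (a vertex of \(e'\) lies in \(e\)), so \(\Delta(H')\le\Delta\). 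Feeding \(H'\) to \cref{thm:uniform} with uniformity \(r\) would then give \(\lceil A\Delta^{1/(r-1)}\rceil=O_{r,k}(\Delta^{1/(r-1)})\) colors in \(\widetilde O_r(n)\) space, which is what we want.

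The main obstacle is the duplicate-freeness hypothesis of \cref{thm:uniform}. We cannot deduplicate in \(\widetilde O(n)\) space by storing all edges. I would handle this by keeping duplicates as a multiset: \(H'\) becomes a multihypergraph whose degree (with multiplicity) is still at most \(\Delta\). I would then argue that the proof of \cref{thm:uniform} only uses the degree bound and edge-by-edge events. The LLL dependency counts and the palette-sparsification union bounds are indexed by edges, and multiplicities only repeat identical events, so they only help. If the algorithm's internal sampling (e.g.\ storing edges that are monochromatic-conflict with respect to sampled lists) stores a duplicate twice, this costs at most a factor of the stored-edge bound, which is already controlled by degree.

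Alternatively, if I want a purely black-box reduction, I would avoid truncation collisions by choosing a random \(r\)-subset per edge, which makes no difference to the argument. A cleaner fallback is to run \cref{thm:uniform} separately for each size \(q\in\{r,\ldots,k\}\) on the subhypergraph \(H_q\) of \(q\)-edges. Each \(H_q\) is duplicate-free with \(\Delta(H_q)\le\Delta\), so it receives a coloring \(c_q\) with \(O(\Delta^{1/(q-1)})\le O(\Delta^{1/(r-1)})\) colors. We then color each vertex by the tuple \((c_r(v),\ldots,c_k(v))\). The product palette has size \(\prod_q O(\Delta^{1/(q-1)})\), which is too many in general, so the product approach fails unless \(\Delta\) is constant. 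This is why the truncation route, justified by the multiset robustness of the uniform proof, is the main line. The remaining checks are a union bound over \(O(1)\) failure events and the note that the range condition on \(\Delta\) ensures the parameter fed in is legal. If \(\Delta\) exceeds \(\binom{n-1}{r-1}\), then \(O(\Delta^{1/(r-1)})\ge n\) colors suffice trivially by giving every vertex its own color.
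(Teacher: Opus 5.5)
Your main line (truncate each edge to a canonical $r$-subset, then run the $r$-uniform algorithm on the resulting multiset stream) is correct, and it is a genuinely different route from the paper's. The paper does not truncate. It runs the construction of \Cref{thm:uniform} separately for each edge size $q\in\{r,\ldots,k\}$, but shares across all sizes one palette $Q=\lceil A_{r,k}\Delta^{1/(r-1)}\rceil$ and one set of lists. It keeps size-specific counters sampled at rate $R_{r,k}\log n/Q^{q-s}$ and makes a single subset-elimination pass over all candidate and full-edge constraints. It then re-proves the degree hierarchy for $F_{s,v}=\bigcup_q F^{(q)}_{s,v}$ using $q\ge r$ and $Q^{r-1}=\Theta_{r,k}(\Delta)$. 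Sharing the lists and merging constraints before the LLL, rather than taking a product of colorings, is exactly what rescues your per-size fallback. Because each $E_q$ is duplicate-free, the paper never meets repeated edges; the cost is cross-size bookkeeping and constants depending on $k$.

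Your truncation, the streaming analogue of the paper's proof of \Cref{cor:palette_sparsification_bounded}, reduces everything to one uniformity, so space and constants depend only on $r$. The price is a white-box check that \Cref{thm:uniform} tolerates a multiset stream. That check succeeds, but for more specific reasons than ``multiplicities only repeat identical events.''
\begin{itemize}
\item Repeated edges do change the algorithm's behavior. The counters $C_S$, and the codegrees in \Cref{lem:active-core-detection} and in conditions (a)--(b) defining $F_{s,v}$, become multiplicity counts, so extra cores may be detected. \Cref{lem:streaming-residual-hierarchy} survives because its double counting $\sum_{S\ni v,\,|S|=s}d(S)=\binom{r-1}{s-1}d(v)$ needs only the multiplicity-degree bound $d(v)\le\Delta$, which you established.
\item Duplicate pairs $(e',c)$ are not merely a space cost. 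The injection in \Cref{lem:domination} requires $N_s(v,c)$ to count distinct constraints. Counting multiplicities would let the trimming step discard too many colors; for $r=2$ there are no cores to absorb a heavily repeated pair. The algorithm defines $\mathcal K$ as a set, so deduplication is automatic, but your argument should rely on this explicitly.
\end{itemize}
Finally, choosing a random $r$-subset per edge does not prevent collisions, so that aside does not yield a black-box reduction. Your separate treatment of $\Delta>\binom{n-1}{r-1}$, via $n\le r\Delta^{1/(r-1)}$, is correct and is needed to stay within the parameter range of \Cref{thm:uniform}.
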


Randomization is essential for obtaining so few colors within these space bounds, at least for suitable maximum degrees that are powers of \(\log n\). The next theorem says that, for any fixed semi-streaming memory bound, some such degrees force a deterministic algorithm to use exponentially more colors as a function of a positive power of \(\Delta\).

\begin{theorem}[Exponential deterministic lower bound]
\label{thm:det-streaming-lb}
Fix \(k\ge2\) and \(a\ge0\). There exist constants \(c=c(k,a)>0\) and \(C=C(k,a)>0\) such that the following holds for all sufficiently large \(n\). Set
\[
\Delta=\left\lceil C\log^{a+2}n\right\rceil.
\]
Any deterministic one-pass streaming algorithm that properly colors every \(n\)-vertex \(k\)-uniform hypergraph of maximum degree at most \(\Delta\) using \(O(n\log^a n)\) bits of space must use at least \(\exp(\Delta^c)\) colors in the worst case.
\end{theorem}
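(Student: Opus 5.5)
We prove \cref{thm:det-streaming-lb} by an adversary argument that mixes a counting (pigeonhole) step on memory states with a combinatorial lemma about colorings of large uniform hypergraphs; this is the standard route for deterministic streaming coloring lower bounds. Let \(Q\) be the number of colors the algorithm uses and suppose, for contradiction, that \(Q<\exp(\Delta^c)\). The adversary builds the stream in \(t\) phases. It maintains a current vertex set \(U_i\subseteq V\) on which the algorithm's eventual color, as a function of the memory state, is forced into a restricted palette.

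\textbf{Key steps.} Because the algorithm is deterministic, after any prefix of the stream its memory state is a deterministic function of that prefix. If we also imagine the stream ending immediately, the output coloring is a deterministic function of the state.

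\begin{enumerate}
\item \emph{Phase structure.} In phase \(i\) we take a large set \(U_{i-1}\) of still-``free'' vertices, each of current degree below \(\Delta\), and partition it into blocks. Inside each block we feed a fixed gadget: a \(k\)-uniform hypergraph on \(m\) vertices with maximum degree \(\Delta/t\) whose independent sets are all small. A random sparse \(k\)-graph suffices, with independence number \(O(m\,\Delta^{-1/(k-1)}\log\Delta)\).
\item \emph{Pigeonhole on states.} Consider the family of possible continuations, namely the choice of which blocks or gadget copies to insert. The number of memory states is at most \(2^{O(n\log^a n)}\), while the number of distinct continuations is far larger. Two continuations therefore lead to the same state, and the final coloring must be proper for both.
\item \emph{Shrinking the palette.} The resulting coloring must then be proper simultaneously for the union of the two gadget families. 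A standard averaging over blocks shows the following: either the algorithm already uses many colors, or some color class contains a large set \(U_i\) that is independent in everything inserted so far. We recurse on \(U_i\) with the palette reduced by one. Equivalently, the vertices of \(U_i\) have at least one color excluded.
\item \emph{Parameters.} With \(\Delta=C\log^{a+2}n\), the per-phase degree budget is \(\Delta/t\), and each phase loses a factor of \(\mathrm{poly}(\Delta)\) in \(|U_i|\). The number of phases \(t\) must satisfy \(t\cdot(\text{per-phase loss})\le\log n\). Each phase forces the algorithm to commit one extra color per surviving vertex. One then shows the palette size grows like \(\exp(t^{\Omega(1)})\), e.g.\ via a product or lexicographic gadget as in Ramsey-type constructions, giving \(Q\ge\exp(\Delta^{c})\).
\end{enumerate}

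\textbf{Main obstacle.} The hardest step is turning the information bottleneck into an \emph{exponential} rather than linear blow-up in the number of colors. The memory of \(n\log^a n\) bits must be shown to forget which of exponentially many edge sets was seen. The approach I would try first is a structure like the shift graph or Erdős–Hajnal hypergraph. Its vertices are tuples, and edges encode monotone chains. Such a hypergraph has chromatic number growing like an iterated logarithm inverse, so only degree \(\mathrm{poly}\log\) is needed for \(\exp(\Delta^c)\) colors to be forced. The adversary would hide, among \(2^{\omega(n\log^a n)}\) candidate embeddings of this structure into \(V\), the single embedding actually used. Pigeonhole then yields a coloring that is proper on a union of many embeddings, and that union has large chromatic number. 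Making the degree accounting work with \(\Delta=\Theta(\log^{a+2}n)\) is where I expect most of the technical effort.
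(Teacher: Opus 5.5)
\textbf{There is a genuine gap.} Nothing in the proposal forces more than polylogarithmically many colors, and the step you defer as the ``main obstacle'' is the entire content of the theorem. Your skeleton does match the paper's: an adaptively built stream in phases, with pigeonhole over the \(2^S\) memory states. But Step~3 excludes only one color per phase, and Step~4 limits you to \(t=O(\log n/\log\Delta)=O(\log n/\log\log n)\) phases. So the scheme forces only about \(t\) colors, whereas \(\exp(\Delta^c)=\exp(\Theta(\log^{c(a+2)}n))\) is superpolylogarithmic. The claimed \(\exp(t^{\Omega(1)})\) growth is asserted, not derived.

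Step~2 is also too weak as stated. Two continuations reaching the same state only certify that the output is proper on the union of two admissible inputs. That union has maximum degree at most \(2\Delta\), so by the Local Lemma it is \(O(\Delta^{1/(k-1)})\)-colorable. Any valid argument must show the output is proper on a hypergraph of maximum degree at least roughly \(q^{k-1}\), that is, on the union of a huge family of admissible inputs.

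The shift-graph suggestion points the wrong way. Shift graphs and Erd\H{o}s--Hajnal shift hypergraphs have chromatic number only logarithmic, or iterated logarithmic, in their size and hence in their degree. They force few colors relative to degree, not many.

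The missing idea is a quantitative version of your pigeonhole step: the missing-edge lemma of Assadi, Chen, and Sun.
\begin{itemize}
\item Sample each edge of a base hypergraph \(B\) independently with probability \(\rho\), conditioned on maximum degree at most \(2\rho d\); this event has probability at least \(1/2\).
\item Some state \(\sigma\) is reached with probability at least \(2^{-S}\). Every input reaching \(\sigma\) avoids the set \(M(\sigma)\) of edges of \(B\) that no such input contains. Hence \(2^{-S}\le 2(1-\rho)^{|M(\sigma)|}\), and \(|M(\sigma)|=O(S/\rho)\).
\item In a single stage with \(B=K^{(k)}_n\), every monochromatic \(k\)-set of the output lies in \(M(\sigma)\), but this alone gives only polylogarithmically many colors.
\item The paper therefore iterates for \(p\approx\sqrt{\log n/\log\log n}\) stages. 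The next base is \(M_i\) restricted to its low-degree vertices, so the degree bound \(d_i\) shrinks by a factor \(\Theta(n\Delta/(p^2S))=\Theta(\log n\log\log n)\) per stage, while \(\rho_i d_i=\Delta/(2p)\) stays fixed.
\item An exchange argument at the first stage where a \(k\)-set leaves the missing family shows that every monochromatic \(k\)-set on the at least \(n/2\) surviving vertices lies in \(M_p\).
\item Convexity gives at least \(n^k/(4^kk!\,q^{k-1})\) such sets. Hence \(q\ge\bigl(n\Delta/(C_kp^2(S+1))\bigr)^{p/(k-1)}\).
\end{itemize}
The gain per stage is thus a multiplicative factor of \((\log n\log\log n)^{\Theta(1)}\), not one color. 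This yields \(\log q=\Omega(\sqrt{\log n\log\log n})\ge\Delta^c\) for any fixed \(c<1/(2(a+2))\).
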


Our techniques also yield a palette sparsification theorem for general uniform hypergraphs.
Given a list \(L(v)\) of available colors at each vertex, \(H\) is \emph{\(L\)-colorable} if it has a proper coloring \(\chi\) with \(\chi(v)\in L(v)\) for every \(v\). Palette sparsification asks whether colorability is preserved when each vertex keeps only a small random subset of the global palette. We show that, from a palette of size \(\Theta(\Delta^{1/(k-1)})\), random lists of size \(\Theta(\sqrt{\log n})\) suffice for general uniform hypergraphs.

\begin{theorem}[Palette sparsification for uniform hypergraphs]
\label{thm:palette_sparsification}
For sufficiently large absolute constants \(A,K>0\), every \(k\ge2\), all sufficiently large \(n\), and every \(\Delta\ge1\), let \(H\) be an \(n\)-vertex \(k\)-uniform hypergraph of maximum degree at most \(\Delta\), and set
\[
Q=\left\lceil A\Delta^{1/(k-1)}\right\rceil, \qquad\ell=\min\left\{Q,\left\lceil K\sqrt{\log n}\right\rceil\right\}.
\]
If every vertex \(v\) independently receives a uniformly random \(\ell\)-subset \(L(v)\subseteq[Q]\), then \(H\) is \(L\)-colorable w.h.p.
\end{theorem}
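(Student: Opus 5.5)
We will prove \cref{thm:palette_sparsification} with a random-partial-coloring argument driven by the Lov\'asz Local Lemma. The lists $L(v)$ are sampled first and never changed. Conditioned on them, we colour each vertex independently and uniformly from its own list $L(v)$, and we show that with positive conditional probability no edge is monochromatic. Our target is therefore a property of the lists alone, holding w.h.p. over the sampling, that makes the LLL applicable to the list-colouring step.

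If $\ell=Q$ the lists are the whole palette and the introduction's LLL argument applies directly. So assume $\ell=\lceil K\sqrt{\log n}\rceil<Q$. Under uniform list colouring, an edge $e=\{v_1,\dots,v_k\}$ is monochromatic with probability
\[
p_e=\ell^{-k}\Bigl|\bigcap_i L(v_i)\Bigr|.
\]
In expectation over the lists, $\E|\bigcap_i L(v_i)|=Q(\ell/Q)^k$, which gives $\E p_e=Q^{-(k-1)}$. This matches the unrestricted case. The bad event $B_e$ depends only on the colours of the vertices in $e$, so it is mutually independent of all events for edges disjoint from $e$. Each edge meets at most $k\Delta$ others, so the symmetric LLL needs $p_e\le 1/(ek\Delta)$. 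On average we have $p_e\approx A^{-(k-1)}/\Delta$, which is small enough for large $A$. The difficulty is that $p_e$ fluctuates: an edge whose lists share many colours has $p_e$ as large as $\ell^{-(k-1)}$, which can exceed $1/\Delta$.

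To handle these fluctuations we will use the asymmetric (weighted) LLL. We assign each event $B_e$ the weight $x_e=2p_e$ and need
\[
p_e\le x_e\prod_{f\cap e\neq\emptyset}(1-x_f).
\]
This holds if $\sum_{f\ni v}p_f\le c/k$ for every vertex $v$ and a small constant $c$. So the key lemma is a concentration bound: w.h.p. over the lists, for every $v$,
\[
S_v:=\sum_{f\ni v}\ell^{-k}\Bigl|\bigcap_{u\in f}L(u)\Bigr|\le \frac{c}{k}.
\]
We will condition on $L(v)$ and write $S_v=\ell^{-k}\sum_{\alpha\in L(v)}N_\alpha$. Here $N_\alpha$ counts the edges $f\ni v$ in which every other vertex's list contains $\alpha$. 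Then $\E N_\alpha\le\Delta(\ell/Q)^{k-1}$, so $\E S_v\le A^{-(k-1)}$.

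The main obstacle is concentration of $S_v$. The $N_\alpha$ are degree-$(k-1)$ polynomials in the independent list choices of $v$'s neighbours, and the edges through $v$ overlap arbitrarily. The per-vertex failure probability must be $n^{-2}$, and the list size $\sqrt{\log n}$ sits exactly at the threshold where this is achievable. We would first try Kim--Vu or Janson-type polynomial concentration, bounding the expected partial derivatives by powers of $\ell/Q$. If this proves too weak, we would truncate at a bad-vertex set $D$, namely the vertices whose neighbourhoods concentrate many copies of one colour. We would show that $D$ induces only small components, w.h.p. of size $O(\log n/\ell^2)$, and then list-colour those pieces with a separate greedy or LLL argument after fixing the colours outside them.

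The $\sqrt{\log n}$ scale should enter as follows. Take a fixed $v$ and colour $\alpha$, and suppose $t$ edges through $v$ all carry $\alpha$ in all $k-1$ of their other lists. In a worst-case sunflower structure this event has probability roughly $(\ell/Q)^{t}$ combined with collisions among only $\ell$ choices per list, and the overlaps behave like $e^{-\Theta(\ell^2)}$. Making this at most $n^{-3}$ requires $\ell^2\gtrsim\log n$. We expect the bulk of the work to be the careful union bound over all $v$ and $\alpha$ in this step, and in particular in controlling edges that share $k-2$ vertices with each other.
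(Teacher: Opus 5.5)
Your plan has a genuine gap. It rests on the key lemma that, w.h.p., $S_v=\sum_{f\ni v}p_f\le c/k$ for every $v$. For $k\ge3$ this lemma is false, so no concentration technique can prove it. Take $k=3$ and a ``book'' with edges $\{u,v,w_i\}$ for $i=1,\dots,\Delta$, so $Q\approx A\sqrt{\Delta}$. With probability $\approx\ell^2/Q$ some colour $\alpha$ lies in $L(u)\cap L(v)$. Then about $\Delta\ell/Q\approx\ell\sqrt{\Delta}/A$ pages also have $\alpha$, and each gives an edge with $p_f\ge\ell^{-3}$. Hence $S_v\gtrsim\sqrt{\Delta}/(A\ell^2)$, which is unbounded once $\Delta\gg\ell^4$.

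Take $\approx n/\Delta$ disjoint books with $\Delta$ a large fixed power of $\log n$. Then some book has this property w.h.p., because the bad event has only polynomially small probability $\approx\ell^2/Q$, not $e^{-\Theta(\ell^2)}$ as your heuristic suggests. Moreover, the LLL itself breaks, not just your sufficient condition. These edge events all share $u$ and $v$, so they form a clique in the dependency graph with total probability $>1$. No criterion using only that graph and those probabilities (Shearer's included) can certify a colouring, although one obviously exists: give $u$ and $v$ different colours.

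Your fallback, as described, does not repair this. Attach $Q$ private pages to every edge of a graph of maximum degree $\approx\Delta/Q\approx Q/A^2$, which keeps all degrees at most $\Delta$. Each such heavy pair is active with probability $\approx\ell^2/Q$. So a typical vertex lies in $\Theta(\ell^2/A^2)\to\infty$ active heavy pairs, the bad set is essentially everything, and there is no shattering into small components.

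The missing idea is to change the events rather than concentrate them.

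\emph{Compression.} Whenever more than $D^{s-t}$ current $s$-sets contain a $t$-set $T$, replace all of them by $T$; forbidding $T$ to be monochromatic in $c$ implies the same for every superset. The process ends in an antichain with the degree hierarchy $|\{S:T\subseteq S,\ |S|=s\}|\le D^{s-|T|}$. It contains no singleton, since $D^{k-1}\ge\Delta$, and every original edge contains a final set. In the book, the pair $\{u,v\}$ replaces all $\Delta$ edges, leaving only the events $B_{\{u,v\},c}$ for $c\in L(u)\cap L(v)$.

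\emph{Colour-specific events.} The paper uses events $B_{S,c}$ under the lopsided LLL, so events demanding the same colour at a vertex do not conflict.

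\emph{Discarding overloaded colours.} Instead of requiring all of $L(v)$ to be good, it discards colours $c$ with more than $\Lambda^{s-1}$ size-$s$ constraints at $(v,c)$. The hierarchy plus a high-moment bound make one colour overloaded with probability $e^{-\Omega(\ell)}$. Negative association across colours then makes half of $L(v)$ overloaded with probability only $e^{-\Omega(\ell^2)}$. That, not a sunflower count, is where $\ell=\Theta(\sqrt{\log n})$ enters.
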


\begin{remark}[Constants]
The constants \(A\) and \(K\) are independent of \(k\).  Consequently, \Cref{thm:palette_sparsification} also applies when \(k\) grows with \(n\).
\end{remark}

\begin{remark}[Tightness]\label{remark:optimality}
The construction of \cite{CE26} gives a lower bound \(\ell=\Omega_{A,k}(\sqrt{\log n})\) when \(Q=\lceil A\Delta^{1/(k-1)}\rceil\).  Consider a disjoint union of copies of \(K^{(k)}_{(k-1)\ell+1}\).  Here \(\Delta=\binom{(k-1)\ell}{k-1}\) and hence \(A\Delta^{1/(k-1)}=\Theta_{A,k}(\ell)\).  Within one component, the probability that all vertices receive the same \(\ell\)-element list is \(\exp(-\Theta_{A,k}(\ell^2))\).  On that event, the component is not list-colorable: among its \((k-1)\ell+1\) vertices, one of the \(\ell\) colors must appear on at least \(k\) vertices.  There are \(\Theta_k(n/\ell)\) independent components.  Thus, for a sufficiently small \(c_{A,k}>0\), lists of size at most \(c_{A,k}\sqrt{\log n}\) fail w.h.p.  The \(O(\sqrt{\log n})\) bound is therefore asymptotically tight.
\end{remark}

\begin{corollary}[Palette sparsification for bounded-rank hypergraphs]
\label{cor:palette_sparsification_bounded}
For sufficiently large absolute constants \(A,K>0\), integers \(2\le r\le k\), all sufficiently large \(n\), and every \(\Delta\ge1\), let \(H\) be an \(n\)-vertex hypergraph of maximum degree at most \(\Delta\) whose edge sizes belong to \(\{r,r+1,\ldots,k\}\).  Set
\[
Q=\left\lceil A\Delta^{1/(r-1)}\right\rceil,
\qquad \ell=\min\left\{Q,\left\lceil K\sqrt{\log n}\right\rceil\right\}.
\]
If every vertex \(v\) independently receives a uniformly random \(\ell\)-subset \(L(v)\subseteq[Q]\), then \(H\) is \(L\)-colorable w.h.p.
\end{corollary}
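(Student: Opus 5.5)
The plan is to reduce \Cref{cor:palette_sparsification_bounded} to \Cref{thm:palette_sparsification} by a \emph{truncation} step: shrink every edge to an \(r\)-subset of itself. Properness is inherited upward. If no vertex subset \(e'\subseteq e\) of size \(r\) is monochromatic, then \(e\) is not monochromatic either. So it suffices to build an \(r\)-uniform hypergraph \(H'\) on the same vertex set in which each edge \(e\) of \(H\) is replaced by one \(r\)-subset \(e'\subseteq e\), and then to show that \(H'\) is \(L\)-colorable w.h.p. The lists \(L(v)\) do not depend on \(H'\), and \(H'\) is fixed before the lists are sampled. Hence, once \(\Delta(H')\le \Delta\) or \(\Delta(H')=O(\Delta)\), \Cref{thm:palette_sparsification} applies to \(H'\) with \(k=r\).

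The key issue is the maximum degree of \(H'\). Each edge of \(H\) contributes one edge of \(H'\), and a vertex \(v\) lies in \(e'\) only if \(v\in e\). Therefore \(\deg_{H'}(v)\le \deg_H(v)\le\Delta\), for any choice of the subsets \(e'\). A subtlety is that distinct edges of \(H\) may truncate to the same \(r\)-set, so \(H'\) could be a multihypergraph. I would either delete duplicates, which only lowers degrees and keeps the same constraint set, or note that the LLL-type argument behind \Cref{thm:palette_sparsification} does not need simplicity. Deleting duplicates is cleaner, because \Cref{thm:palette_sparsification} is stated for simple hypergraphs.

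We then set \(Q=\lceil A\Delta^{1/(r-1)}\rceil\) and \(\ell=\min\{Q,\lceil K\sqrt{\log n}\rceil\}\), exactly as in \Cref{thm:palette_sparsification} applied to the \(r\)-uniform \(H'\) of maximum degree at most \(\Delta\). The constants \(A,K\) there are absolute and independent of the uniformity, so the same constants work here. That theorem gives that \(H'\) is \(L\)-colorable w.h.p., and any such coloring is a proper \(L\)-coloring of \(H\).

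The main obstacle is just the bookkeeping about whether \Cref{thm:palette_sparsification} tolerates the degenerate cases. If \(\Delta<1\) after truncation, or some vertices have degree \(0\), the bound is still "at most \(\Delta\)", so nothing changes. If the theorem had needed a lower bound on the degree, I would have padded \(H'\) with dummy edges on fresh vertices; that is not required here. I do not expect a genuine difficulty.
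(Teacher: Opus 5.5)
Your proposal is correct and matches the paper's proof. Like the paper, you truncate each edge to an $r$-subset, remove repeated edges, observe that $\Delta(H')\le\Delta(H)\le\Delta$, and apply \Cref{thm:palette_sparsification} with $k=r$ and the same lists, using that the constants $A,K$ do not depend on the uniformity.
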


\paragraph{Related work.} Assadi, Chen, and Khanna~\cite{ACK19} introduced palette sparsification for graph coloring and used it to obtain semi-streaming algorithms. Alon and Assadi~\cite{AA20} proved, among other results, that random lists of size \(\Theta_\varepsilon(\sqrt{\log n})\) are sufficient and necessary when the global palette has size \(\lceil(1+\varepsilon)\Delta\rceil\), again with streaming consequences. More recently, Assadi and Yazdanyar~\cite{AY25} developed an asymmetric form of graph palette sparsification, in which different vertices may receive lists of different sizes. Assadi, Chen, and Sun~\cite{assadi2022deterministic} established strong deterministic one-pass lower bounds for graph coloring. Their proof relates the memory of an algorithm to the edges whose absence it can determine; our \Cref{thm:det-streaming-lb} extends this method to uniform hypergraphs.

Streaming coloring has also been studied directly for hypergraphs. Radhakrishnan, Shannigrahi, and Venkat~\cite{RSV15} considered two-coloring and gave space-efficient randomized one-pass algorithms, as well as deterministic space lower bounds, for hypergraphs with large edge sizes and suitable bounds on the number of edges. 

Adamson, Halld\'orsson, and Nolin~\cite{AHN23} gave a randomized one-pass semi-streaming algorithm for linear uniform hypergraphs, where a hypergraph is linear if any two distinct edges intersect in at most one vertex. For fixed \(k\ge3\), their algorithm uses \(O_k((\Delta/\sigma)^{1/(k-1)})\) colors, where \(\sigma=\min\{\log\Delta,\log\log n\}\), for \(\Delta\ge2\) and sufficiently large \(n\). Our \Cref{thm:uniform} applies to general uniform hypergraphs, where many edges may share several vertices.

For hypergraph palette sparsification, Casselgren~\cite{Casselgren24} showed that random lists of size \(O_{A,k}(\log n)\) suffice when the global palette has size \(\lceil A\Delta^{1/(k-1)}\log n\rceil\). Casselgren and Eriksson~\cite{CE26} proved that lists of size \(O_k((\log n)^{1/k})\) suffice for linear uniform hypergraphs with a palette of size \(O_k(\Delta^{1/(k-1)})\). Their construction implies an \(\Omega_{A,k}(\sqrt{\log n})\) lower bound for general uniform hypergraphs when the palette has size \(\lceil A\Delta^{1/(k-1)}\rceil\) (see discussion in \Cref{remark:optimality}). Our \Cref{thm:palette_sparsification} matches this lower bound, and \Cref{thm:uniform} achieves the same number of colors in one pass using semi-streaming space.

\subsection{Proof overview} The algorithm begins by giving each vertex an independent random list of \(\ell=\Theta_k(\sqrt{\log n})\) colors from a palette of size \(Q=\Theta(\Delta^{1/(k-1)})\). An arriving edge can become monochromatic only in a color common to all its vertices' lists. For each such color \(c\), we store the pair \((e,c)\) as a coloring constraint that the vertices of \(e\) must not all receive \(c\). Edges with no common available color need not be stored. After the stream, we simplify the stored constraints, remove a few problematic colors from each list, and use an algorithmic form of the LLL to obtain the final coloring.

The total expected space is \(\widetilde O_k(n)\) bits. The basic saving is already visible when \(k=3\), where \(Q=\Theta(\sqrt{\Delta})\). For an edge \(e=\{u,v,w\}\), the expected number of colors common to all three lists is \(Q(\ell/Q)^3=\widetilde O(1/\Delta)\). Since there are at most \(n\Delta/3\) edges, the expected number of stored pairs \((e,c)\) is only \(\widetilde O(n)\). A fixed memory limit and independent repetitions turn the expected space bound into a guaranteed one.

The main difficulty is that a small total number of stored constraints does not prevent many of them from sharing the same vertices. For example, if many triples contain the same pair \(\{u,v\}\), then forbidding \(u\) and \(v\) from both receiving a common color \(c\) satisfies all corresponding constraints. More generally, we call a shared proper subset \(S\) of an edge, with at least two vertices, a \emph{core}. Our algorithm samples occurrences of \emph{active} cores, whose vertices share a common available color, and uses cores with large sampled counts to generate constraints on fewer vertices. After the stream, constraints are processed from smaller sets to larger ones, discarding any constraint implied by one on a proper subset.

Previous palette-sparsification arguments use local color-degree bounds to control how many relevant edges involve a given vertex and color~\cite{AA20,CE26}. To obtain analogous control for general hypergraphs in one pass, for each vertex \(v\) we form auxiliary hypergraphs describing the possible kept constraints involving \(v\). We establish a \emph{degree hierarchy} bounding how many auxiliary edges can contain any given set of vertices. This implies that only a few colors at each vertex occur in too many kept constraints. After removing these colors, enough remain for the LLL to yield a proper coloring.

The palette-sparsification theorem uses the same probabilistic estimates and final coloring step. However, without the streaming constraint, the core compression can be carried out directly using exact incidence counts rather than sampled estimates. For the deterministic lower bound, we extend the result of Assadi, Chen, and Sun~\cite{assadi2022deterministic} to \(k\)-uniform hypergraphs.

\paragraph{Organization.} After the preliminaries, \Cref{sec:overloaded_are_rare} shows that few colors need to be removed. \Cref{sec:lll} then shows that sublists satisfying the required local constraint-count bounds admit a coloring avoiding all stored bad events.
\Cref{sec:algorithm} constructs and analyzes the streaming algorithm. \Cref{sec:compression} then proves the palette-sparsification theorem, and \Cref{sec:deterministic-lower-bound} proves the deterministic lower bound. The appendices give the detailed concentration proof and the streaming extension to bounded-rank hypergraphs.
\section{Preliminaries}
\label{sec:prelim}

\subsection{Hypergraph notation}
Unless stated otherwise, \(H=(V,E)\) is an \(n\)-vertex \(k\)-uniform hypergraph with \(k\ge2\), and \(e(H)=|E|\). We focus on the nontrivial cases \(n\ge k\) and \(E\ne\emptyset\). The codegree of a set \(S\subseteq V\), with \(|S|\le k\), counts the edges containing all vertices of \(S\):
\[
    d_H(S):=|\{e\in E:S\subseteq e\}|.
\]
Thus \(d_H(\emptyset)=e(H)\), \(d_H(\{v\})\) is the ordinary vertex degree, and \(d_H(\{u,v\})\) counts the edges containing both \(u\) and \(v\). We have \(\max_{v\in V}d_H(\{v\})=\Delta(H)\le\Delta\).

Given a list assignment \(L\), we write
\[
    L(S):=\bigcap_{v\in S}L(v)
\]
for the colors available at every vertex of \(S\). We say that \(S\) is active if \(L(S)\neq\emptyset\). A core is a proper subset of an input edge; the streaming algorithm uses cores of sizes \(2,\ldots,k-1\) to represent shared parts of edges.

A color-specific constraint \((S,c)\) requires that the vertices of \(S\) are not all assigned color \(c\). Under a random choice of vertex colors, its violation is the \emph{bad event} \(B_{S,c}\) that all these vertices receive \(c\). We use the same label \(B_{S,c}\) for the stored constraint and for this event, depending on whether we are describing the algorithm or its probability analysis. Satisfying a constraint on \(S\) also satisfies the same-color constraint on every superset of \(S\).

For a palette size \(Q\) and a list size \(\ell\), we define \(D:=Q/80\) and \(\Lambda:=\ell/40\). Powers of \(D\) will bound edge counts before sampling the lists, while powers of \(\Lambda\) will bound the number of constraints involving a fixed vertex and color afterward. In arguments using these parameters, we assume \(D>1\) and \(\Lambda>1\); the cases with small palettes are handled separately.

\subsection{Streaming model and other notation}
For a positive integer \(N\), write \([N]=\{1,\ldots,N\}\). Throughout the streaming sections, \(k\ge2\) is fixed. Each arriving edge is given by the identifiers of its \(k\) distinct vertices, and the algorithm is given \(n\), \(k\), and \(\Delta\) in advance. It sees every edge only once, in an order fixed independently of its random choices, and outputs the coloring after the stream. We call the computation performed after the last edge \emph{postprocessing}. Its working memory is included in the space bound.

Space is measured in bits. The notation \(\widetilde O_k(\cdot)\) suppresses factors that are fixed powers of \(\log n\) and constants depending on \(k\). Since the input has no repeated edges, we may assume \(\Delta\le\binom{n-1}{k-1}\). Thus a vertex identifier, color, counter, or pointer takes \(O_k(\log n)\) bits. 

\subsection{Algorithmic lopsided LLL solver}
\label{subsec:lll-solver}

The LLL provides a way to avoid many unlikely bad events simultaneously. We use an algorithmic version: choose all vertex colors independently, and whenever a bad event occurs, redraw the colors of the vertices involved in that event. 
To distinguish overlap from conflict, consider the following example. The event that \(u\) and \(v\) both receive red conflicts with the event that \(v\) and \(w\) both receive blue, because they require different colors at \(v\). Two events requiring red at \(v\) overlap but do not conflict in this sense. The lopsided LLL allows us to count only these conflicts. The following standard result states the version we need.

\begin{lemma}[Algorithmic lopsided local lemma (Theorem~6.1 of \cite{moser-tardos}; also~\cite{pegden-lopsided})]
\label{thm:lll-solver}
Let \(\Omega=\prod_{v\in V}\Omega_v\) be a product probability space with independent variables \(X_v\in\Omega_v\). Let \(\calB\) be a finite family of atomic bad events, meaning that each event specifies one value for each of the variables it involves. Write \(\vbl(B)\subseteq V\) for this set of variables, and \(\sigma_B(v)\in\Omega_v\) for the required values, so
\begin{align*}
    B=\bigcap_{v\in\vbl(B)}[X_v=\sigma_B(v)].
\end{align*}
Define the conflict graph on \(\calB\) by joining \(B\) and \(B'\) if they require different values of some common variable. Suppose we can assign a weight \(x_B\in(0,1)\) to each event such that, for every \(B\in\calB\),
\begin{equation}
\label{eq:lll-condition}
    \Prb[B]
    \le
    x_B\prod_{B'\sim B}(1-x_{B'}),
\end{equation}
where the product is over neighbors in the conflict graph. Then the lopsided Moser--Tardos algorithm, which repeatedly resamples the variables of any bad event that occurs, terminates w.p. one and avoids all events in \(\calB\). Moreover, the expected number of resamplings of event \(B\) is at most \(x_B/(1-x_B)\), and the expected total number is at most \(\sum_{B\in\calB}x_B/(1-x_B)\).
\end{lemma}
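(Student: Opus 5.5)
This lemma is the standard lopsided Moser--Tardos result, so the plan is to reduce to the lopsidependency framework rather than reprove resampling from scratch. The key structural fact I would establish first is that, for atomic events, the conflict graph is a \emph{lopsidependency graph} in the sense required by Theorem~6.1 of \cite{moser-tardos}. Concretely, for each \(B\) and each set \(\mathcal S\subseteq\calB\) of events not conflicting with \(B\), I need
\[
\Prb\Bigl[B\,\Bigm|\,\bigcap_{B'\in\mathcal S}\overline{B'}\Bigr]\le\Prb[B].
\]
I would show the stronger monotonicity statement behind this. Fix \(B\) and condition on \(X_v=\sigma_B(v)\) for all \(v\in\vbl(B)\). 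Any \(B'\in\mathcal S\) agrees with \(B\) on shared variables, so setting those variables to \(B\)'s values can only make \(B'\) more likely to occur. Equivalently, the event \(\bigcap\overline{B'}\) has conditional probability given \(B\) at most its unconditional probability. This gives the inequality above by Bayes.

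To make the monotonicity precise, I would use a coupling argument. Take \(X\) and resample the coordinates in \(\vbl(B)\) to \(\sigma_B\), yielding \(X'\). Whenever some \(B'\in\mathcal S\) holds under \(X\), it still holds under \(X'\). This is because every coordinate of \(\vbl(B')\cap\vbl(B)\) already equals \(\sigma_{B'}(v)=\sigma_B(v)\) under \(B'\), and the other coordinates are unchanged. Since \(X'\) is distributed as \(X\) conditioned on \(B\) (by independence across coordinates), \(\Prb[\bigcup B'\mid B]\ge\Prb[\bigcup B']\) follows.

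With this in hand, the remaining step is to invoke the Moser--Tardos analysis in its lopsided form. The witness-tree argument bounds the probability that a given witness tree appears by \(\prod\Prb[B_i]\). In the lopsided setting, the ``resampling table'' coupling must be replaced by the fact that atomic events with no conflict can be checked against fresh values. This is exactly where atomicity and the conflict graph enter, and it is the content of Theorem~6.1 of \cite{moser-tardos} (alternatively \cite{pegden-lopsided}). Summing over witness trees rooted at \(B\) with the Galton--Watson branching-process bound under condition~\eqref{eq:lll-condition} gives an expected number of resamplings of \(B\) at most \(x_B/(1-x_B)\). Linearity gives the total bound. A finite expectation implies termination with probability one, and at termination no event of \(\calB\) holds.

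The main obstacle is the witness-tree step in the lopsided setting. There, a resampled variable of a non-conflicting neighbour is not independent of the history in the naive sense, so one must verify that the tree-check probability is still at most \(\prod\Prb[B_i]\). I would not redo this; I would cite \cite{moser-tardos,pegden-lopsided} after confirming that our conflict graph meets their hypotheses via the coupling above.
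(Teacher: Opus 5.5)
Your plan (check that the conflict graph is admissible for the lopsided Moser--Tardos theorem, then cite it) matches the paper, which states this lemma as a standard result without proof. The problem is the hypothesis you check. The inequality $\Prb[B\mid\bigcap_{B'\in\mathcal S}\overline{B'}]\le\Prb[B]$ for non-neighbours is the Erd\H{o}s--Spencer negative-dependency condition. That is the hypothesis of the \emph{existential} lopsided LLL: it yields an assignment avoiding $\calB$, but it is not the hypothesis of the algorithmic theorem you cite.

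Moser and Tardos define lopsidependency as a pairwise, syntactic relation. Events $B$ and $B'$ are lopsidependent if there are assignments $f\in B$ and $g\in B'$ that differ only on $\vbl(B)\cap\vbl(B')$ with $f\notin B'$ or $g\notin B$. Their theorem requires the graph to contain every such pair. This relation implies your conditional inequality, but not conversely. For example, take one uniform variable $X\in\{0,\dots,N\}$ with $N\ge3$: the events $\{X\in\{0,1\}\}$ and $\{X\in\{0,2\}\}$ satisfy the inequality in both directions, yet are lopsidependent. The witness-tree argument uses the pairwise property, not the conditional inequality. So your coupling is correct, but it does not confirm what you say it confirms.

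The missing check is short and uses atomicity directly. Suppose $B$ and $B'$ do not conflict, so $\sigma_B=\sigma_{B'}$ on $\vbl(B)\cap\vbl(B')$. Any $f\in B$ and $g\in B'$ that differ only on these variables must both take the common required values there. Hence $f=g\in B\cap B'$, and $B,B'$ are not lopsidependent. Therefore every lopsidependent pair is joined in the conflict graph; in fact the two graphs coincide, since conflicting atomic events are clearly lopsidependent. With this, the cited theorem applies verbatim. The rest of your outline is fine: finite expected resamplings give termination with probability one, and linearity gives the total bound.
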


In our application, \(X_v\) is the color of vertex \(v\), and \(B_{S,c}\) requires \(X_v=c\) for every \(v\in S\). To apply the lemma, we will keep a small collection of these constraints and bound how many involve any fixed vertex and color.

\section{Concentration and LLL Tools}
This section develops two tools used in both the streaming and palette-sparsification results.
\subsection{Overloaded sampled colors are rare}\label{sec:overloaded_are_rare}

In this subsection, each \(L(v)\) is an independent uniform \(\ell\)-subset of \([Q]\).

Fix a vertex \(v\) and a size \(s\). We will describe possible constraints on \(s\)-vertex sets containing \(v\) by an auxiliary \((s-1)\)-uniform hypergraph \(F_{s,v}\) on \(V\setminus\{v\}\): an edge \(U\) represents the set \(\{v\}\cup U\). The precise construction will differ between the streaming version and the offline version, but in each case \(F_{s,v}\) is fixed before the lists are sampled. For a color \(c\), keep only auxiliary edges whose vertices all have \(c\) in their lists:
\[
    E(F_{s,v,c}):=\{e\in E(F_{s,v}):c\in L(e)\}.
\]

Thus \(F_{s,v,c}\) is the subhypergraph induced by the vertices that have color \(c\) available. If \(c\in L(v)\), its edges describe possible constraints involving \(v\) and \(c\). Bounding \(e(F_{s,v,c})\) will therefore bound the number of such constraints.

We call a sampled color \(c\in L(v)\) \emph{overloaded} for \(v\) if
\[
e(F_{s,v,c})>\Lambda^{s-1}
\qquad\text{for some }2\le s\le k,
\]
and write
\[
\widetilde{\mathrm{Bad}}(v):=\{c\in L(v):c\text{ is overloaded for }v\}.
\]

\begin{assumption}[Degree hierarchy]\label{ass:residual-hierarchy}
For every vertex \(v\) and every \(2\le s\le k\), there is an \((s-1)\)-uniform hypergraph \(F_{s,v}\) on \(V\setminus\{v\}\), fixed independently of the sampled lists, such that
\[
e(F_{s,v})\le D^{s-1},
\]
and, for every \(1\le j\le s-1\),
\[
\max_{|T|=j}d_{F_{s,v}}(T)\le \Gamma D^{s-1-j},
\]
where \(\Gamma\ge1\).
\end{assumption}


\begin{lemma}[One color is rarely overloaded]\label{lem:literal-loads}
Under Assumption~\ref{ass:residual-hierarchy}, for every \(v\in V\) and \(c\in[Q]\), and for sufficiently large \(\ell\) depending on \(\Gamma\),
\[
\Prb\left[c\in\widetilde{\mathrm{Bad}}(v)\,\middle|\,c\in L(v)\right]\le e^{-\Omega_{\Gamma}(\ell)}.
\]
\end{lemma}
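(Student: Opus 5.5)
The plan is to condition on \(c\in L(v)\) and observe that this conditioning is harmless. Each \(F_{s,v}\) lives on \(V\setminus\{v\}\) and is fixed before sampling, and the lists are independent across vertices. So after conditioning, the indicators \(I_u=[c\in L(u)]\) for \(u\ne v\) are still independent Bernoulli variables with mean \(p=\ell/Q\). For each fixed \(2\le s\le k\), write
\[
X_s:=e(F_{s,v,c})=\sum_{U\in E(F_{s,v})}\prod_{u\in U}I_u .
\]
This is a degree-\((s-1)\) polynomial in independent indicators. By \Cref{ass:residual-hierarchy},
\[
\E[X_s]\le D^{s-1}p^{s-1}=(\ell/80)^{s-1}=(\Lambda/2)^{s-1}.
\]
So the overload threshold \(\Lambda^{s-1}\) is a factor \(2^{s-1}\) above the mean. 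It then suffices to show \(\Prb[X_s>\Lambda^{s-1}]\le e^{-\Omega_\Gamma(\ell)}\) for each \(s\) and take a union bound over \(s\). The \(k\) terms in the union bound are absorbed once \(\ell\) is large; if the exponent degrades with \(s\), I would first try to keep it uniform, as discussed below.

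For \(s=2\), \(X_2\) is a sum of independent indicators with mean at most \(\Lambda/2\). A Chernoff bound then gives \(\Prb[X_2>\Lambda]\le e^{-\Omega(\Lambda)}=e^{-\Omega(\ell)}\). For general \(s\), I will use a factorial-moment argument with \(t=\lfloor\varepsilon\Lambda\rfloor\), where \(\varepsilon=\varepsilon(\Gamma,s)>0\) is small. I bound \(\E\binom{X_s}{t}\) by summing over ordered \(t\)-tuples of distinct edges of \(F_{s,v}\), divided by \(t!\). Each tuple contributes \(p^{|U_1\cup\dots\cup U_t|}\).

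The tuples are counted edge by edge. When the \(i\)-th edge is added, let \(T\) be its intersection with the union \(W\) of the earlier edges, and let \(j=|T|\). Then \(|W|\le (s-1)t\) and the new edge contributes a factor \(p^{s-1-j}\).
\begin{itemize}
\item If \(j=0\), there are at most \(D^{s-1}\) choices, contributing at most \((\Lambda/2)^{s-1}\).
\item If \(j\ge1\), there are at most \(\binom{(s-1)t}{j}\) choices of \(T\). The hierarchy bound \(d_{F_{s,v}}(T)\le\Gamma D^{s-1-j}\) then leaves at most \(\Gamma D^{s-1-j}\) edges. The total contribution is at most \(\Gamma\,((s-1)t)^j(\Lambda/2)^{s-1-j}=(\Lambda/2)^{s-1}\cdot\Gamma\,(2(s-1)\varepsilon)^j\).
\end{itemize}
Summing over \(j\), each step costs at most \((\Lambda/2)^{s-1}\bigl(1+\Gamma\sum_{j\ge1}(2(s-1)\varepsilon)^j\bigr)\le (\Lambda/2)^{s-1}(1+\eta)\), where \(\eta\) is small when \(\varepsilon\ll 1/(s\Gamma)\). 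Hence \(\E\binom{X_s}{t}\le \bigl((1+\eta)(\Lambda/2)^{s-1}\bigr)^t/t!\). Markov's inequality applied to \(\binom{X_s}{t}\), compared with \(\binom{\Lambda^{s-1}}{t}\ge(\Lambda^{s-1}-t)^t/t!\), gives
\[
\Prb[X_s>\Lambda^{s-1}]\le\bigl((1+\eta)2^{-(s-1)}(1+o(1))\bigr)^t\le 2^{-\Omega(t)}=e^{-\Omega(\varepsilon\ell)} .
\]
The \(o(1)\) term needs \(t\ll\Lambda^{s-1}\), which holds for \(\ell\) large enough in terms of \(\Gamma\).

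The main obstacle is the overlap accounting. First, \(\varepsilon\) as chosen above depends on \(s\), so it can shrink with \(k\). To keep the exponent \(\Omega_\Gamma(\ell)\) uniform, I would exploit the slack \(2^{-(s-1)}\): the per-step factor only needs to be below \(2^{s-2}\) rather than close to \(1\). That allows \(\Gamma\sum_j(2(s-1)\varepsilon)^j\) to be as large as \(2^{s-2}\), and then a choice of \(\varepsilon\) depending on \(\Gamma\) alone should suffice. Second, the count of \(T\) must use only vertices already in \(W\), which justifies \(\binom{|W|}{j}\). If the intersection-pattern bookkeeping becomes messy, I would fall back on a polynomial concentration inequality such as Kim--Vu, whose partial-derivative expectations \(\E_j\le\Gamma(\Lambda/2)^{s-1-j}\) come directly from the hierarchy, although that route tends to give weaker exponents.
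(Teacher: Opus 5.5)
Your strategy matches the paper's: condition on \(c\in L(v)\), expand a high moment of \(X_s\) edge by edge, and charge each new edge by its intersection \(T\) with the union of the earlier edges via the codegree hierarchy. You then finish with Markov. Using factorial moments instead of the paper's ordinary \(h\)-th moment with \(h=\lfloor \ell/(320\Gamma)\rfloor\) is a cosmetic difference. With \(\varepsilon=\varepsilon(\Gamma,s)\), your argument is correct for fixed \(k\) and gives \(e^{-\Omega_{\Gamma,k}(\ell)}\).

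\textbf{The gap is uniformity in \(s\).} The lemma asks for a constant and an \(\ell\)-threshold depending only on \(\Gamma\). The paper uses this with \(\Gamma=1\) in \Cref{thm:palette_sparsification}, whose constants are independent of \(k\). Your proposed fix claims that \(\Gamma\sum_{j=1}^{s-1}(2(s-1)\varepsilon)^j\le 2^{s-2}\) for one \(\varepsilon=\varepsilon(\Gamma)\). This is false: once \(s-1>1/\varepsilon\), the \(j=s-1\) term alone is \((2(s-1)\varepsilon)^{s-1}>2^{s-1}\). Taking \(\varepsilon\approx 1/(s\Gamma)\) does not help. Then \(t=\lfloor\varepsilon\Lambda\rfloor=0\) as soon as \(s\gtrsim\ell/\Gamma\). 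Also, the per-\(s\) exponent no longer decays in \(s\), so the union bound over \(s\) costs a factor \(k\). That factor cannot be absorbed by taking \(\ell\) large in terms of \(\Gamma\) alone.

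\textbf{The missing step is to keep the \(1/j!\) inside \(\binom{(s-1)t}{j}\)} instead of bounding it by \(((s-1)t)^j\). Summing with the binomial theorem and then applying Bernoulli's inequality \((1+x)^\Gamma\ge1+\Gamma x\) gives
\[
1+\Gamma\sum_{j\ge1}\binom{(s-1)t}{j}\Bigl(\frac{2}{\Lambda}\Bigr)^{j}
\le 1+\Gamma\Bigl(\bigl(1+\tfrac{2}{\Lambda}\bigr)^{(s-1)t}-1\Bigr)
\le 1+\Gamma\bigl(e^{2(s-1)\varepsilon}-1\bigr)
\le e^{2\Gamma(s-1)\varepsilon}.
\]
Set \(\varepsilon=1/(8\Gamma)\). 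The per-step factor is then at most \(e^{(s-1)/4}\), so each step gains \(\beta^{s-1}\) with \(\beta=e^{1/4}/2<1\). This gives
\[
\Prb[X_s>\Lambda^{s-1}]\le\bigl(\beta^{s-1}/(1-\varepsilon)\bigr)^t,
\]
where \(t=\lfloor\Lambda/(8\Gamma)\rfloor\) is independent of \(s\). Since \(\beta/(1-\varepsilon)<1\), this bound is geometric in \(s\). The union bound over \(2\le s\le k\) is therefore a convergent series and costs no factor of \(k\). This is exactly how the paper's proof closes.

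A minor point: for \(s=2\), the ratio \(t/\Lambda^{s-1}\) is about \(\varepsilon\), not \(o(1)\). The resulting factor \((1-\varepsilon)^{-t}\) is harmless, but taking \(\ell\) large does not remove it.
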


\begin{proof}
The proof appears in Appendix~\ref{sec:proof-of-concentration}. It bounds a high moment of the edge count \(e(F_{s,v,c})\), using the degree hierarchy to control overlapping edges, and then applies Markov's inequality.
\end{proof}

We next bound the number of overloaded colors in each list. The list is sampled without replacement, so the availability of different colors is not independent. We use \emph{negative association}, which gives the needed substitute for independence.

\begin{lemma}[Few colors are overloaded]\label{lem:palette-trimming}
Under Assumption~\ref{ass:residual-hierarchy}, let
\(\ell=\left\lceil K_\Gamma\sqrt{\log n}\right\rceil\).
For sufficiently large \(K_\Gamma\) depending on \(\Gamma\), and all sufficiently large \(n\), w.p. at least \(1-n^{-\Omega_\Gamma(1)}\), simultaneously for every vertex \(v\),
\[
|L(v)\setminus\widetilde{\mathrm{Bad}}(v)|\ge \lfloor\ell/2\rfloor.
\]
\end{lemma}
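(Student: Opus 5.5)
The plan is to fix a vertex \(v\) and show that the number of overloaded colors in \(L(v)\) exceeds \(\lceil \ell/2\rceil\) with probability at most \(n^{-\Omega_\Gamma(1)-1}\). A union bound over the \(n\) vertices then gives the claim. Since \(|L(v)|=\ell\), it suffices to show \(|\widetilde{\mathrm{Bad}}(v)|\le \ell-\lfloor\ell/2\rfloor=\lceil\ell/2\rceil\). Fix \(v\) and condition on \(L(v)\). Because the auxiliary hypergraphs \(F_{s,v}\) live on \(V\setminus\{v\}\) and are fixed in advance, the events ``\(c\) is overloaded'' for \(c\in L(v)\) depend only on the lists of the other vertices, which are independent of \(L(v)\). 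So, conditioned on \(L(v)\), each \(c\in L(v)\) is overloaded with probability \(p:=\Prb[c\in\widetilde{\mathrm{Bad}}(v)\mid c\in L(v)]\le e^{-\Omega_\Gamma(\ell)}\) by \Cref{lem:literal-loads}.

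The main obstacle is the dependence among these events for different colors \(c\). The overload indicator \(Z_c:=\mathbf 1[\exists s:\ e(F_{s,v,c})>\Lambda^{s-1}]\) is a monotone nondecreasing function of the indicators \(\{\mathbf 1[c\in L(u)]\}_{u\ne v}\), and these indicators involve only color \(c\). For each fixed \(u\), the vector \((\mathbf 1[c\in L(u)])_{c\in[Q]}\) is a uniform \(\ell\)-subset indicator, which is negatively associated (a standard fact: permutation distributions and 0/1 vectors with fixed sum are NA). Vectors for different \(u\) are independent, and the union of independent NA families is NA. Monotone functions of disjoint sets of NA variables are again NA. Since each \(Z_c\) uses the disjoint coordinate block \(\{(u,c)\}_u\), the family \(\{Z_c\}_{c\in L(v)}\) is negatively associated, and in particular satisfies the product bound \(\Prb[\bigcap_{c\in T}\{Z_c=1\}]\le\prod_{c\in T}\Prb[Z_c=1]\).

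Given that product bound, I would avoid Chernoff and simply take a union bound over subsets \(T\subseteq L(v)\) with \(|T|=t:=\lceil\ell/2\rceil\):
\[
\Prb\bigl[|\widetilde{\mathrm{Bad}}(v)|\ge t\bigr]\le\binom{\ell}{t}p^{t}\le 2^{\ell}e^{-\Omega_\Gamma(\ell)\cdot\ell/2}=e^{-\Omega_\Gamma(\ell^2)},
\]
valid once \(\ell\) is large enough in terms of \(\Gamma\). This is where the \(\sqrt{\log n}\) list size enters: with \(\ell=\lceil K_\Gamma\sqrt{\log n}\rceil\), we get \(\ell^2\ge K_\Gamma^2\log n\). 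Choosing \(K_\Gamma\) large relative to the implicit constant in \Cref{lem:literal-loads} makes the bound at most \(n^{-2}\) (or \(n^{-C}\) for any desired \(C\)), and it also guarantees that \(\ell\) is large enough for \Cref{lem:literal-loads} to apply when \(n\) is large. The union bound over \(v\) finishes the proof. The only delicate point is verifying the NA claim, namely that the monotone events for distinct colors use disjoint variable blocks, and I would state it carefully with the standard closure properties of Joag-Dev–Proschan/Dubhashi–Ranjan.
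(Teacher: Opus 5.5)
Your proposal is correct and follows essentially the same route as the paper. You condition on \(L(v)\), use negative association of the indicators \(\mathbf 1[c\in L(u)]\) (NA within each list, independent across vertices, and preserved by monotone functions of disjoint per-color blocks) to get the product bound for any fixed set of \(\lceil\ell/2\rceil\) colors, and then union-bound over the \(\binom{\ell}{\lceil\ell/2\rceil}\) subsets to reach \(e^{-\Omega_\Gamma(\ell^2)}\) and over all vertices, exactly as the paper does.
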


\begin{proof}
Fix a vertex \(v\) and condition on \(L(v)\). For \(u\neq v\) and \(c\in[Q]\), let \(Y_{u,c}:=\mathbf1[c\in L(u)]\). For each fixed \(u\), the variables \(\{Y_{u,c}\}_{c\in[Q]}\) are negatively associated, since \(L(u)\) is a uniformly random \(\ell\)-subset of \([Q]\). As the lists are independent across vertices, the collection \(\{Y_{u,c}:u\neq v,\ c\in[Q]\}\) is also negatively associated~\cite{JoagDevProschan83}.

For a fixed color \(c\), overload means that \(e(F_{s,v,c})>\Lambda^{s-1}\) for some \(s\). This event is increasing: adding \(c\) to more vertex lists can only add edges to \(F_{s,v,c}\). It depends only on the variables \(\{Y_{u,c}:u\neq v\}\), so different colors depend on disjoint groups of variables. Hence, for any fixed \(t\) distinct colors of \(L(v)\), negative association and \Cref{lem:literal-loads} give
\[
\Prb[\text{all \(t\) colors are overloaded}]\le e^{-\Omega_\Gamma(\ell t)}.
\]
If at least half the list is overloaded, some specified set of \(\lceil\ell/2\rceil\) colors must all be overloaded. A union bound over these choices gives
\[
\Prb[|\widetilde{\mathrm{Bad}}(v)|\ge\lceil \ell/2\rceil]\le\binom{\ell}{\lceil \ell/2\rceil}e^{-\Omega_\Gamma(\ell \lceil \ell/2\rceil)}=e^{-\Omega_\Gamma(\ell^2)}.
\]
Since \(\ell=\lceil K_\Gamma\sqrt{\log n}\rceil\), choosing \(K_\Gamma\) sufficiently large and taking a union bound over \(v\in V\) gives
\[
\Prb\left[|L(v)\setminus\widetilde{\mathrm{Bad}}(v)|\ge\lfloor \ell/2\rfloor\text{ for all }v\in V\right]\ge1-n^{-\Omega_\Gamma(1)}.
\qedhere
\]
\end{proof}

\subsection{Local Lemma completion}\label{sec:lll}

Given a family \(\mathcal B=\{B_{S,c}\}\) of color-specific constraints with \(c\in L(S)\), define the \emph{local constraint count}
\[
    N_s(v,c):=\left|\left\{B_{S,c}\in\mathcal B:|S|=s,\ v\in S\right\}\right|,
\]
for every \(v\in V\), \(c\in[Q]\), and \(2\le s\le k\). Thus \(N_s(v,c)\) counts constraints on exactly \(s\) vertices that include \(v\) and forbid color \(c\). The problematic colors we remove at \(v\) are
\[
\mathrm{Bad}(v):=\{c\in L(v):N_s(v,c)>\Lambda^{s-1}
\text{, for some }2\le s\le k\}.
\]

Given a list $G(v)$ for every vertex $v$, let $X_v$ be chosen independently and uniformly from $G(v)$. Each $B_{S,c}\in\mathcal B$ with $c\in G(v)$ for every $v\in S$ is then realized as the bad event
\[
    B_{S,c}=\{X_v=c\text{ for every }v\in S\}.
\]
Let \(\mathcal B_G\) denote the family of all such events. A constraint whose color is absent from one of its vertices' new lists is automatically satisfied, so it does not belong to \(\mathcal B_G\). Unlike \(\widetilde{\mathrm{Bad}}(v)\), which was defined using an auxiliary hypergraph, \(\mathrm{Bad}(v)\) uses the constraints actually kept. 

\begin{lemma}[LLL completion]\label{lem:lll-verification} 
Suppose that \(|G(v)|=m\) for every vertex \(v\), and that for every \(v\in V\), every \(c\in G(v)\), and every \(2\le s\le k\),
\[
    N_s(v,c)\le\Lambda^{s-1}.
\]
If \(\Lambda\le \frac{m}{10}\), then, for sufficiently large \(m\), the hypotheses of \Cref{thm:lll-solver} hold for \(\calB_G\). Consequently, \(\textnormal{\textsc{LLL-Solver}}\) finds an assignment avoiding all events in \(\calB_G\). 
\end{lemma}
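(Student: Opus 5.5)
We verify the lopsided condition \eqref{eq:lll-condition} with the weight choice \(x_B:=2\Prb[B]\). Here \(\Prb[B_{S,c}]=m^{-|S|}\), since each \(X_v\) is uniform on \(G(v)\) with \(|G(v)|=m\) and the variables are independent. Since \(1-x\ge e^{-2x}\) for \(x\le 1/2\), it suffices to show, for every \(B\in\calB_G\),
\[
\prod_{B'\sim B}(1-x_{B'})\ge \tfrac12,
\qquad\text{which follows from}\qquad
\sum_{B'\sim B}x_{B'}\le \tfrac{\ln 2}{2}.
\]
Then \(x_B\prod(1-x_{B'})\ge 2\Prb[B]\cdot\tfrac12=\Prb[B]\). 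We also need \(x_B<1\), which holds because \(x_B=2m^{-|S|}\le 2m^{-2}\).

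The key step is to bound the conflict neighbourhood. Fix \(B=B_{S,c}\) with \(|S|=s\). A conflicting event \(B'=B_{S',c'}\) must share a vertex \(u\in S\cap S'\) and have \(c'\ne c\) with \(c'\in G(u)\). So we enumerate over \(u\in S\) (at most \(s\) choices), over \(c'\in G(u)\setminus\{c\}\) (at most \(m\) choices), and over sizes \(s'\in\{2,\dots,k\}\). For each triple the number of events is at most \(N_{s'}(u,c')\le\Lambda^{s'-1}\); here we use the hypothesis, which applies since \(c'\in G(u)\). Each such event has weight \(2m^{-s'}\). Hence
\[
\sum_{B'\sim B}x_{B'}\le s\cdot m\sum_{s'=2}^{k}\Lambda^{s'-1}\cdot 2m^{-s'}
=2s\sum_{s'=2}^{k}(\Lambda/m)^{s'-1}
\le 2s\sum_{j\ge1}10^{-j}=\tfrac{2s}{9},
\]
using \(\Lambda\le m/10\).

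This is where the main obstacle lies: the factor \(s\) from the choice of \(u\) is not bounded, and \(2s/9\) is not small for large \(s\), especially since the remark says the constants must not depend on \(k\). To fix this I would make the weights nonuniform and exploit the fact that \(\Prb[B]=m^{-s}\) shrinks geometrically in \(s\). Take \(x_{B_{S,c}}:=(\alpha/m)^{|S|}\cdot\beta\) for suitable constants, say \(x_B=e^{|S|}m^{-|S|}\), i.e.\ \(x_B=(e/m)^{|S|}\). The condition then needs \(\prod_{B'\sim B}(1-x_{B'})\ge e^{-s}\). The same enumeration gives
\[
\sum_{B'\sim B}x_{B'}\le s\, m\sum_{s'\ge2}\Lambda^{s'-1}(e/m)^{s'}=s\,e\sum_{j\ge1}(e\Lambda/m)^{j}\le s\,e\cdot\frac{e/10}{1-e/10}<0.11\,s\cdot e<s/2.
\]
Each \(x_{B'}\le (e/m)^2\) is tiny for large \(m\), so \(\ln(1-x)\ge -x(1+o(1))\). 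Therefore the product is at least \(e^{-s/2\cdot(1+o(1))}\ge e^{-s}\) for sufficiently large \(m\). This verifies \eqref{eq:lll-condition} uniformly in \(k\).

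Finally, every event in \(\calB_G\) is atomic in the sense of \Cref{thm:lll-solver}, because each requires \(X_v=c\) on \(\vbl(B)=S\). The conflict graph is exactly the one we bounded: events requiring the same colour on shared vertices are not joined. So \Cref{thm:lll-solver} applies, and the Moser--Tardos resampling procedure (\textsc{LLL-Solver}) returns an assignment with \(X_v\in G(v)\) avoiding all events of \(\calB_G\). The expected number of resamplings is \(\sum x_B/(1-x_B)=O(\sum_B x_B)\), which is polynomial because \(|\calB_G|\le n\sum_s m\Lambda^{s-1}\) by the same counting.
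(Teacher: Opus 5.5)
Your overall strategy matches the paper's: weights of the form \((\alpha/m)^{|S|}\), and a per-vertex enumeration of conflicts through \(u\in S\), \(c'\in G(u)\setminus\{c\}\) and sizes \(s'\), using \(N_{s'}(u,c')\le\Lambda^{s'-1}\). You were right to abandon \(x_B=2\Prb[B]\). However, the choice \(\alpha=e\) does not work, because of an arithmetic slip: \(\frac{e/10}{1-e/10}\approx0.373\), not \(<0.11\). With \(\Lambda/m=1/10\), your own enumeration only gives
\[
\sum_{B'\sim B}x_{B'}\le s\cdot\frac{e^2}{10-e}\approx1.015\,s .
\]
This exceeds the budget \(s\) that you need for \(\prod_{B'\sim B}(1-x_{B'})\ge e^{-s}\).

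This is not just slack in the estimate. Suppose every count \(N_{s'}(u,c')\) is attained by events that meet \(S\) only at \(u\) and otherwise use fresh vertices. Then the weight sum really is about \(s\,e\sum_{j=1}^{k-1}(e/10)^j\), which exceeds \(s\) once \(k\ge5\). So under the stated hypothesis \(\Lambda\le m/10\), the lopsided condition can genuinely fail for your weights.

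The fix is to choose \(\alpha\) with \(\alpha^2/(10-\alpha)<\ln\alpha\); the paper takes \(\alpha=2\). Then the conflicts through one vertex have total weight at most \(m\sum_{s'\ge2}\Lambda^{s'-1}(2/m)^{s'}=\frac{4\Lambda/m}{1-2\Lambda/m}\le\frac12\). The whole neighbourhood therefore has weight at most \(s/2\), and \((2/m)^s\exp(-(1+o(1))s/2)\ge m^{-s}\) because \(\frac12<\ln 2\). With this change your argument coincides with the paper's proof. Your \(\alpha=e\) would in fact suffice for the ratio \(\Lambda/m\approx1/20\) used later in the paper, but not for the lemma as stated.
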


\begin{proof}
An event on \(|S|\) vertices has probability \(m^{-|S|}\). We give it a slightly larger weight to allow for its conflicts with other events: for every \(B_{S,c}\in\calB_G\), set
\[ 
x_{S,c}:=\left(\frac{2}{m}\right)^{|S|}. 
\] 
Since every event involves at least two vertices, \(\max_{B\in\calB_G}x_B=o(1)\) as \(m\) grows. Fix \(B_{S,c}\in\calB_G\). An event in \(\calB_G\) conflicting through a vertex \(v\in S\) must require a color \(c'\in G(v)\setminus\{c\}\) at \(v\). There are at most \(N_s(v,c')\le\Lambda^{s-1}\) such events of size \(s\). Therefore the sum of their weights is at most
\[ 
\sum_{c'\in G(v)\setminus\{c\}}\sum_{s=2}^k \Lambda^{s-1}\left(\frac{2}{m}\right)^s \le m\sum_{s=2}^k \Lambda^{s-1}\left(\frac{2}{m}\right)^s \le \frac{4\Lambda/m}{1-2\Lambda/m} \le \frac12. 
\] 
Summing over \(v\in S\) may count an event more than once, but still gives the upper bound
\[ 
\sum_{B\sim B_{S,c}}x_B\le \frac{|S|}{2}. 
\] 
For small weights, the product of \(1-x_B\) is bounded below by the exponential of approximately minus their sum. Thus, for sufficiently large \(m\),
\[ 
x_{S,c}\prod_{B\sim B_{S,c}}(1-x_B)\ge \left(\frac{2}{m}\right)^{|S|}\exp\left(-(1+o(1))\frac{|S|}{2}\right)\ge m^{-|S|}=\Prb[B_{S,c}]\;. 
\] 
Thus the lopsided Local Lemma condition holds for every event in \(\calB_G\), and \Cref{thm:lll-solver} applies. 
\end{proof}


\section{The streaming algorithm for uniform hypergraph coloring}
\label{sec:algorithm}

This section proves the main streaming result, \Cref{thm:uniform}. The input is an \(n\)-vertex \(k\)-uniform hypergraph \(H=(V,E)\) of maximum degree at most \(\Delta\), with no repeated edges and fixed \(k\ge2\). We store only constraints for colors common to the vertices of an edge. At the same time, random sampling identifies small vertex sets contained in many edges, which are then used to replace larger constraints.

Fix
\begin{align}\label{eq:definition_of_Q}
    Q:=\left\lceil A\Delta^{1/(k-1)}\right\rceil,\qquad \ell:=\left\lceil K_k\sqrt{\log n}\right\rceil.
\end{align}
We first choose the absolute constant \(A\) sufficiently large. For each fixed \(k\), we then choose \(K_k\) large enough for the probability estimate in \Cref{lem:literal-loads} and \(R_k\) large enough for the sampling guarantee in \Cref{lem:active-core-detection}. The constant \(K_k\) determines the list size, and \(R_k\) determines how often we count a core occurrence.

\begin{algorithm}[!htbp]
\caption{A streaming algorithm for uniform hypergraph coloring}
\label{alg:hypergraph-kernel}
\begin{algorithmic}[1]
\Require An insertion-only one-pass stream of \(n\)-vertex \(k\)-uniform edges, degree bound \(\Delta\), palette size \(Q\), list size \(\ell\), thresholds \(Q^{k-s}\), and sampling constant \(R_k\).
\Ensure A coloring \(\chi:V\to[Q]\), or \textsc{Fail}.
\State Independently sample a list \(L(v)\subseteq[Q]\) of size \(\ell\) for all vertices \(v\in V\).
\State Initialize an empty list \(\mathcal K_{\mathrm{edge}}\) of pairs \((e,c)\); counters \(C_S\) are initially zero and stored only when positive.
\State Write \(B_{S,c}\) for the constraint that the vertices of \(S\) not all receive color \(c\).
\For{each arriving edge \(e\)}
    \State Compute \(L(e)=\bigcap_{v\in e}L(v)\).
    \For{each color \(c\in L(e)\)}
        \State Append the full-edge constraint \((e,c)\) to \(\mathcal K_{\mathrm{edge}}\).
    \EndFor
    \For{each subset \(S\subsetneq e\) with \(2\le |S|=s\le k-1\)}
        \State Compute \(L(S)=\bigcap_{v\in S}L(v)\).
        \If{\(L(S)\neq\emptyset\)}
            \State w.p. \(\rho_s:=R_k\log n/Q^{k-s}\), increment the sparse counter \(C_S\), capped at \(\lceil R_k\log n\rceil\).
        \EndIf
    \EndFor
\EndFor
\For{\(s=2,3,\ldots,k-1\)}
    \State \(\widehat{\mathcal P}_s\gets \{S: |S|=s,\ C_S\ge (R_k/2)\log n\}\).
\EndFor
\State Let \(\mathcal K\) be the set consisting of all \(B_{S,c}\) with \(S\in\widehat{\mathcal P}_s\), \(2\le s\le k-1\), and \(c\in L(S)\), together with all \(B_{e,c}\) corresponding to \((e,c)\in\mathcal K_{\mathrm{edge}}\).
\State \(\mathcal B\gets\emptyset\).
\For{\(s=2,3,\ldots,k\)}
    \For{each \(B_{S,c}\in\mathcal K\) with \(|S|=s\)}
        \If{there is no \(B_{T,c}\in\mathcal B\) with \(T\subsetneq S\)}
            \State Add \(B_{S,c}\) to \(\mathcal B\).
        \EndIf
    \EndFor
\EndFor
\State Compute all \(N_s(v,c):=\left|\left\{B_{S,c}\in\mathcal B:|S|=s,\ v\in S\right\}\right|\).
\State Set
\(
    L'(v)\gets L(v)\setminus \{c:\exists s,\ N_s(v,c)>\Lambda^{s-1}\}\text{, for every }v\in V.
\)
\If{some \(|L'(v)|<\lfloor\ell/2\rfloor\)}
    \State Output \textsc{Fail}.
\EndIf
\State For every \(v\in V\), choose an arbitrary sublist \(G(v)\subseteq L'(v)\) of size \(m:=\lfloor\ell/2\rfloor\).
\State Let \(\mathcal B_G\) be obtained from \(\mathcal B\) by deleting every constraint \(B_{S,c}\) for which \(c\notin G(S)\).
\State Run \(\textsc{LLL-Solver}(\mathcal B_G,\{G(v)\}_{v\in V})\).
\If{the solver returns an assignment \(\chi(v)\in G(v)\) avoiding all events in \(\mathcal B_G\)}
    \State Output \(\chi\).
\Else
    \State Output \textsc{Fail}.
\EndIf
\end{algorithmic}
\end{algorithm}

\paragraph{Overview of the algorithm.} We first consider \(Q\ge2\ell\) and \(Q^{k-s}\ge R_k\log n\) for every \(2\le s\le k-1\). These inequalities ensure that the palette is larger than the sampled lists and that all sampling probabilities below are at most one. In the remaining cases, the maximum degree is small enough to store the entire input; we handle them in \Cref{sec:proof-uniform}.

Each vertex \(v\) independently samples a uniform \(\ell\)-subset \(L(v)\subseteq[Q]\). When an edge \(e\) arrives, we store \((e,c)\) for every \(c\in L(e)\). This pair records the constraint that \(e\) not be monochromatic in \(c\); we call it a \emph{full-edge constraint} because it involves all \(k\) vertices.

For each active subset \(S\subsetneq e\) of size \(s\ge2\), the arrival of \(e\) is one \emph{occurrence} of the core \(S\). We count this occurrence independently w.p.
\[
\rho_s:=R_k\log n/Q^{k-s}
\]
using a counter \(C_S\). A counter is stored only after its first counted occurrence, so we do not allocate space for all possible vertex subsets. 
If \(S\) belongs to roughly \(Q^{k-s}\) edges, its expected sampled count is roughly \(R_k\log n\), which is enough to detect it reliably (see \Cref{lem:active-core-detection}).

After the stream, \(\widehat{\mathcal P}_s\) are the \(s\)-vertex sets with sufficiently large sampled counts. For each such \(S\) and each \(c\in L(S)\), we add the constraint that \(S\) not be monochromatic in \(c\). We process these constraints, together with the full-edge constraints, in increasing order of the number of vertices involved. We keep \(B_{S,c}\) only if no already kept \(B_{T,c}\) has \(T\subsetneq S\). Thus, for each color, no kept set contains another kept set. Write \(\mathcal B\) for the resulting family.

We next remove \(\mathrm{Bad}(v)\), the colors involved in too many kept constraints at \(v\), and arbitrarily choose a sublist \(G(v)\) of size \(m=\lfloor\ell/2\rfloor\). If some vertex has too few colors left, this copy of the algorithm reports failure; \Cref{cor:palette-trimming} shows that this is unlikely. Otherwise, let \(\mathcal B_G:=\{B_{S,c}\in\mathcal B:c\in G(S)\}\) be the constraints that can still be violated under the new lists. Finally, \Cref{lem:lll-verification} finds an assignment avoiding all constraints in \(\mathcal B_G\).

\Cref{alg:hypergraph-kernel} gives the pseudocode for one copy of the algorithm. In the proof of \Cref{thm:uniform}, we cap its memory and run independent copies to obtain the high-probability space guarantee. The remaining small-degree cases are handled separately by storing the entire input.


\subsection{Correctness and expected space}
\label{sec:correctness-space}

We first prove the two basic guarantees of the stored information. 

\begin{lemma}[Correctness]
\label{lem:kernel-correct}
If an assignment \(\chi(v)\in G(v)\) avoids all events in \(\calB_G\), then \(\chi\) is a proper coloring of the original hypergraph.
\end{lemma}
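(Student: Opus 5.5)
The plan is to argue by contradiction: suppose some edge \(e\in E\) is monochromatic under \(\chi\), say every vertex of \(e\) receives color \(c\). Since \(\chi(v)\in G(v)\subseteq L'(v)\subseteq L(v)\) for every \(v\), we get \(c\in G(e)\subseteq L(e)\). The task is then to exhibit a constraint \(B_{T,c}\in\calB_G\) with \(T\subseteq e\); such a \(T\) is entirely colored \(c\) by \(\chi\), contradicting the assumption that \(\chi\) avoids all events in \(\calB_G\).

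First step: because the edge \(e\) arrived in the stream and \(c\in L(e)\), the algorithm appended \((e,c)\) to \(\mathcal K_{\mathrm{edge}}\), so \(B_{e,c}\in\mathcal K\). Second step: examine the filtering loop over \(s=2,\dots,k\). When \(B_{e,c}\) is processed at \(s=k\), either it is added to \(\mathcal B\), or there already exists some \(B_{T,c}\in\mathcal B\) with \(T\subsetneq e\). In both cases \(\mathcal B\) contains a constraint \(B_{T,c}\) with \(T\subseteq e\) and with the same color \(c\). Since constraints are only ever added to \(\mathcal B\) and never removed, this constraint is still present at the end of the loop.

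Third step: pass from \(\mathcal B\) to \(\calB_G\). A constraint \(B_{T,c}\in\mathcal B\) is deleted only if \(c\notin G(T)\). But \(T\subseteq e\) and \(c\in G(v)\) for every \(v\in e\), so \(c\in G(T)\). Hence \(B_{T,c}\in\calB_G\). Under \(\chi\), every vertex of \(T\) has color \(c\), so the event \(B_{T,c}\) occurs. This is the desired contradiction.

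I expect no step to be a genuine obstacle. The only point needing care is the bookkeeping in the second step: the witness \(T\) must carry the same color \(c\), and it must not be discarded later. The filtering rule compares only constraints of the same color, and \(\mathcal B\) is monotone, so both requirements hold. The sampled cores \(\widehat{\mathcal P}_s\) can only add further constraints; correctness never depends on them.
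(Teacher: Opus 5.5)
Your proof is correct and follows the paper's argument: the stored full-edge constraint $B_{e,c}$ is either kept, or discarded because of an already kept same-color constraint $B_{T,c}$ with $T\subsetneq e$. In either case that constraint lies in $\calB_G$ because $c\in G(T)$, and $\chi$ violates it. Your remarks that $\mathcal B$ only grows and that the filter compares only constraints of the same color state explicitly what the paper's proof leaves implicit.
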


\begin{proof}
Suppose, for a contradiction, that an original edge \(e\) is monochromatic in color \(c\). Because \(\chi(v)\in G(v)\subseteq L(v)\), we have \(c\in G(e)\subseteq L(e)\), so the pair \((e,c)\) was stored when \(e\) arrived. If \(B_{e,c}\) was kept, it belongs to \(\calB_G\) and occurs. If it was discarded, an already kept \(B_{S,c}\) with \(S\subsetneq e\) made it unnecessary. All vertices of \(S\) also receive \(c\), and \(c\in G(S)\), so this smaller bad event belongs to \(\calB_G\) and occurs. Either case contradicts the assumption.
\end{proof}

\begin{lemma}[Expected space bound]
\label{lem:space}
One copy of the streaming algorithm uses \(\widetilde O_k(n)\) bits in expectation over the random lists and the random decisions to count core occurrences.
\end{lemma}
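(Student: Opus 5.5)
The plan is to bound separately the three kinds of stored objects: the lists \(L(v)\), the full-edge pairs \((e,c)\) in \(\mathcal K_{\mathrm{edge}}\), and the positive counters \(C_S\). Each identifier, color, or counter value costs \(O_k(\log n)\) bits. It therefore suffices to show that the expected number of stored items is \(\widetilde O_k(n)\). The postprocessing structures \(\widehat{\mathcal P}_s\), \(\mathcal K\), \(\mathcal B\), the counts \(N_s(v,c)\), and the LLL state are built only from these stored items and the lists. Their sizes are at most the number of positive counters times \(\ell\), plus \(|\mathcal K_{\mathrm{edge}}|\), plus \(O(n\ell)\). So they inherit the same expected bound, up to \(\ell=O_k(\sqrt{\log n})\) and \(2^k\) factors.

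\textbf{Lists.} Storing all lists takes \(n\ell\cdot O(\log Q)=\widetilde O_k(n)\) bits deterministically.

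\textbf{Full-edge pairs.} For a fixed edge \(e\) and color \(c\), independence across vertices gives \(\Prb[c\in L(e)]=(\ell/Q)^k\). Hence
\[
\E|L(e)|=Q(\ell/Q)^k=\ell^k/Q^{k-1}.
\]
There are at most \(n\Delta/k\) edges, and \(Q^{k-1}\ge A^{k-1}\Delta\). Therefore
\[
\E|\mathcal K_{\mathrm{edge}}|\le \frac{n\Delta}{k}\cdot\frac{\ell^k}{A^{k-1}\Delta}=O_k(n\ell^k)=\widetilde O_k(n),
\]
since \(\ell^k=O_k(\log^{k/2}n)\) for fixed \(k\).

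\textbf{Counters.} A counter \(C_S\) is stored only if at least one occurrence of \(S\) was counted. The expected number of positive counters is therefore at most the expected number of counted occurrences, summed over all edges \(e\) and all \(S\subsetneq e\) with \(2\le|S|=s\le k-1\). An occurrence of \(S\) is counted only if \(S\) is active and the independent coin with probability \(\rho_s\) succeeds. By a union bound over colors,
\[
\Prb[L(S)\ne\emptyset]\le Q(\ell/Q)^s=\ell^s/Q^{s-1}.
\]
The coin is independent of the lists, so each pair \((e,S)\) contributes at most
\[
\frac{\ell^s}{Q^{s-1}}\cdot\frac{R_k\log n}{Q^{k-s}}=\frac{R_k\ell^s\log n}{Q^{k-1}}\le \frac{R_k\ell^s\log n}{A^{k-1}\Delta}.
\]
The number of pairs \((e,S)\) is at most \(2^k n\Delta/k\). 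Hence the expected number of counted occurrences is \(O_k(n\ell^{k-1}\log n)=\widetilde O_k(n)\).

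The cap on \(C_S\) is not needed for counting counters, but it keeps each counter at \(O(\log\log n+\log R_k)\) bits. The main point to verify is that a sparse dictionary of positive counters costs \(O_k(\log n)\) bits per entry, for the key \(S\) plus its value. Thus space is linear in the number of positive counters rather than in the number of possible \(S\). Combined with the fact that every postprocessing structure is bounded by these stored items times \(\ell\) or \(2^k\), this gives \(\widetilde O_k(n)\) expected bits. The delicate step is making sure the active-set probability and the sampling rate \(\rho_s\) combine to exactly \(Q^{-(k-1)}\), which cancels the factor \(\Delta\) coming from the number of edges.
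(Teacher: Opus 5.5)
Your proposal is correct and follows essentially the same route as the paper: you bound the lists deterministically, bound the full-edge pairs via $Q(\ell/Q)^k$ per edge, and bound the counters by the expected number of counted active occurrences, using activity probability $\ell^s/Q^{s-1}$ times $\rho_s$. The only difference is minor. You charge the at most $\ell$ candidate core constraints per set to the positive counters, whereas the paper charges them to the $(R_k/2)\log n$ sampled occurrences each candidate requires. Your version is slightly cruder but equally sufficient.
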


\begin{proof}
Each stored set has at most \(k\) vertex identifiers and one color or counter, so it takes \(O_k(\log n)\) bits. The lists themselves use \(O(n\ell\log Q)=\widetilde O_k(n)\) bits. It remains to count the full-edge constraints and sampled counters. For a fixed edge and color, all \(k\) lists contain that color w.p. \((\ell/Q)^k\). Summing over edges and colors gives
\[
|E|\cdot \E|L(e)|\le \frac{n\Delta}{k}\cdot Q\left(\frac{\ell}{Q}\right)^k=O_k(n\ell^k)=\widetilde O_k(n).
\]

For a fixed \(s\)-vertex subset, a union bound over colors shows that it is active w.p. at most \(Q(\ell/Q)^s=\ell^s/Q^{s-1}\). Each edge has \(\binom{k}{s}\) such subsets, and an active occurrence is counted w.p. \(\rho_s\). Thus the expected number of counted active occurrences is at most
\[
\binom{k}{s}|E|\frac{\ell^s}{Q^{s-1}}\frac{R_k\log n}{Q^{k-s}}=O_k(n\ell^s\log n)=\widetilde O_k(n).
\]
Since each \(S\in\widehat{\mathcal P}_s\) requires at least \((R_k/2)\log n\) sampled occurrences and creates at most \(\ell\) color-specific events, the expected number of candidate and kept core events is also \(\widetilde O_k(n)\). Summing over \(2\le s\le k-1\), and multiplying the number of stored objects by \(O_k(\log n)\) bits per object, gives the claimed \(\widetilde O_k(n)\) bound.

There is no extra cost for uncounted cores: a counter is created only when an occurrence is counted. Candidate and kept constraints are stored as lists of sets and colors, so the same bit bound covers them as well.
\end{proof}

\subsection{Bounding the local constraint counts}
\label{sec:loads}

We now bound how many kept constraints can involve a fixed vertex \(v\) and color \(c\). Recall that
\[
    N_s(v,c):=|\left\{B_{S,c}\in \calB: |S|=s,v\in S\right\}|.
\]
For \(s<k\), these are constraints on cores; for \(s=k\), they come from input edges. Our goal is \Cref{lem:domination}, which bounds \(N_s(v,c)\) by an edge count in an auxiliary hypergraph. This will allow us to apply the concentration results from \Cref{sec:overloaded_are_rare}.

\begin{lemma}
\label{lem:active-core-detection}
Fix the sampled list assignment \(L\). W.p. at least \(1-n^{-20}\) over the decisions to count core occurrences, provided \(R_k\) is sufficiently large as a function of \(k\), the following hold simultaneously for every \(2\le s\le k-1\) and every \(s\)-set \(S\):
\begin{enumerate}[label=(\roman*)]
    \item If \(L(S)\neq\emptyset\) and \(d_H(S)\ge Q^{k-s}\), then \(S\in\widehat{\calP}_s\).
    \item If \(S\in\widehat{\calP}_s\), then \(d_H(S)>Q^{k-s}/4\).
\end{enumerate}
\end{lemma}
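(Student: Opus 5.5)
With $L$ fixed, whether $S$ is active is deterministic. So for an active $S$, each of the $d_H(S)$ edges containing $S$ gives one occurrence, and each occurrence is counted independently with probability $\rho_s=R_k\log n/Q^{k-s}$. An inactive $S$ never gets a counter and so never enters $\widehat{\calP}_s$. Hence, ignoring the cap, $C_S$ is distributed as $\mathrm{Bin}(d_H(S),\rho_s)$ for active $S$. The cap at $\lceil R_k\log n\rceil$ matters only in part (i), and there it is harmless because the cap lies above the threshold $(R_k/2)\log n$. The plan is to prove each part for a single set $S$ with failure probability at most $n^{-20-k}$ by a Chernoff bound, and then take a union bound over the at most $\sum_{s\le k-1}\binom{n}{s}\le k n^{k-1}$ sets.

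For (i), let $S$ be active with $d:=d_H(S)\ge Q^{k-s}$. Then the uncapped count has mean $\mu=d\rho_s\ge R_k\log n$. The event $S\notin\widehat{\calP}_s$ means the capped count is below $(R_k/2)\log n$. Since the cap exceeds this threshold, that forces the uncapped count to be below $(R_k/2)\log n\le\mu/2$. The lower-tail Chernoff bound gives probability at most $e^{-\mu/8}\le n^{-R_k/8}$.

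For (ii), it suffices to show that any active $S$ with $d\le Q^{k-s}/4$ reaches the threshold with small probability. Here $\mu\le (R_k/4)\log n$, and the threshold $(R_k/2)\log n$ is at least twice this. By monotonicity we may replace $d$ by exactly $Q^{k-s}/4$, so that $\mu_0=(R_k/4)\log n$. The upper-tail bound $\Prb[X\ge(1+\delta)\mu_0]\le e^{-\delta^2\mu_0/3}$ with $\delta=1$ then gives at most $n^{-R_k/12}$.

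The union bound over the at most $kn^{k-1}$ sets multiplies the failure probability by $kn^{k-1}$. Choosing $R_k\ge 12(k+21)$ makes the total failure probability at most $n^{-20}$ for large $n$. The main subtlety is not the concentration itself but the bookkeeping. First, the randomness in the counting decisions must be independent across occurrences and independent of $L$; this holds because we condition on $L$. Second, the cap and the sparse storage must not change the distribution relevant to the threshold event. Third, the assumption $Q^{k-s}\ge R_k\log n$ from the overview is what guarantees $\rho_s\le1$.
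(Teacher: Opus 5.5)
Your proof is correct and follows the paper's argument essentially verbatim. Both treat $C_S$ as a capped $\mathrm{Bin}(d_H(S),\rho_s)$ with the cap above the threshold, use a lower-tail Chernoff bound for (i) and an upper-tail bound for (ii), and finish with a union bound over all $s$-sets; you additionally make the constants explicit and note that inactive sets never enter $\widehat{\calP}_s$.

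One cosmetic point: $Q^{k-s}/4$ need not be an integer, so you cannot literally replace $d$ by it. Instead, apply the upper-tail Chernoff bound with the mean upper bound $\mu_H=(R_k/4)\log n \ge \mathbb{E}[Z_S]$. This gives the same $n^{-R_k/12}$.
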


\begin{proof}
For a fixed active \(s\)-set \(S\), each of the \(d_H(S)\) edges containing \(S\) gives an independent chance to increment its counter. Thus the number counted before imposing the cap is \(Z_S\sim\operatorname{Bin}(d_H(S),\rho_s)\). The stored value is \(C_S=\min\{Z_S,\lceil R_k\log n\rceil\}\). Since the cap is above the candidate threshold, it does not affect whether \(S\) is declared a candidate.

If \(d_H(S)\ge Q^{k-s}\), then \(\E Z_S\ge R_k\log n\), twice the candidate threshold. A Chernoff bound gives \(C_S<(R_k/2)\log n\) w.p. \(\exp(-\Omega(R_k\log n))\). If \(d_H(S)\le Q^{k-s}/4\), then \(\E Z_S\le(R_k/4)\log n\), half the threshold, and a Chernoff bound gives the same upper bound on the probability of incorrectly declaring \(S\) a candidate. There are at most \(\sum_{s=2}^{k-1}n^s\le kn^k\) sets to consider. A union bound proves the lemma when \(R_k\) is sufficiently large.
\end{proof}
Let \(\mathcal E_{\mathrm{det}}\) be the good event from \Cref{lem:active-core-detection}.

The constraints kept by the algorithm depend on random choices. To analyze them, we define a larger family that contains every kept constraint whenever detection succeeds. This family is described by auxiliary hypergraphs constructed only from the input hypergraph. Their thresholds allow the constant-factor uncertainty in the sampled counters.

\paragraph{Auxiliary hypergraphs with relaxed thresholds.} Fix \(v\). An auxiliary edge \(U\) represents a possible kept constraint on \(\{v\}\cup U\). On \(\mathcal E_{\mathrm{det}}\), two conditions are necessary: this set must belong to enough input edges to have been detected, and no smaller shared set containing \(v\) can force its removal. Formally, for \(2\le s\le k-1\), a set \(U\subseteq V\setminus\{v\}\) of size \(s-1\) is an edge of \(F_{s,v}\) if
\begin{enumerate}[label=(\alph*)]
    \item \(d_H(\{v\}\cup U)>Q^{k-s}/4\), and
    \item for every nonempty \(T\subsetneq U\), writing \(|T|=a<s-1\),
    \begin{align}\label{eq:hierarchy_bound}
        d_H(\{v\}\cup T)<Q^{k-a-1}.
    \end{align}
\end{enumerate}
For \(s=k\), start with the hypergraph whose edges are the sets \(e\setminus\{v\}\) for input edges \(e\) containing \(v\). Keep only those sets \(U\) satisfying condition (b), and call the result \(F_{k,v}\). Condition (a) holds automatically because \(d_H(e)=1>1/4\) for an input edge.
\begin{remark}
    The hypergraphs \(F_{s,v}\) depend only on \(H\) and the thresholds, not on the sampled lists or counters. This independence is essential when applying \Cref{lem:palette-trimming}.
\end{remark}

The next lemma justifies that every kept constraint counted by \(N_s(v,c)\) can be mapped to a distinct edge of \(F_{s,v,c}\), by removing $v$.

\begin{lemma}
\label{lem:domination}
On the event ${\mathcal E_{\mathrm{det}}}$, for every \(2\le s\le k\),
\begin{align*}
    N_s(v,c)\le e(F_{s,v,c}).
\end{align*}
\end{lemma}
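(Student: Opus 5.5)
The plan is to define the map $B_{S,c}\mapsto U:=S\setminus\{v\}$ for every kept constraint $B_{S,c}\in\mathcal B$ with $|S|=s$ and $v\in S$. We then check that $U$ is an edge of $F_{s,v,c}$. The map is injective because $S=\{v\}\cup U$ and the color $c$ is fixed. The inequality $N_s(v,c)\le e(F_{s,v,c})$ follows immediately. Membership in $F_{s,v,c}$ has three parts: the color condition $c\in L(U)$, condition (a), and condition (b).

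The color condition is immediate. Every $B_{S,c}$ placed in $\mathcal K$ has $c\in L(S)$: for cores this holds by construction, and for full edges because $c\in L(e)$. Hence $c\in L(U)$.

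Condition (a) is needed only for $s\le k-1$, since for $s=k$ the set $S$ is an input edge and it holds automatically. A kept core constraint has $S\in\widehat{\mathcal P}_s$. On $\mathcal E_{\mathrm{det}}$, \Cref{lem:active-core-detection}(ii) gives $d_H(S)>Q^{k-s}/4$, which is exactly (a).

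Condition (b) is the main step. Fix a nonempty $T\subsetneq U$ with $|T|=a$, and suppose for contradiction that $d_H(\{v\}\cup T)\ge Q^{k-a-1}$. Put $S'=\{v\}\cup T$, of size $a+1$, with $2\le a+1\le s-1\le k-1$. Then $S'\subsetneq S$, and $c\in L(S)\subseteq L(S')$, so $S'$ is active. Since $|S'|=a+1$, we have $Q^{k-|S'|}=Q^{k-a-1}$, so $d_H(S')\ge Q^{k-|S'|}$. By \Cref{lem:active-core-detection}(i), $S'\in\widehat{\mathcal P}_{a+1}$, and hence $B_{S',c}\in\mathcal K$.

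It remains to show that some $B_{T',c}$ with $T'\subseteq S'$ is in $\mathcal B$ once size $a+1$ has been processed. Either $B_{S',c}$ itself was added, or it was rejected because some $B_{T',c}\in\mathcal B$ has $T'\subsetneq S'$. In both cases, $\mathcal B$ contains a constraint of color $c$ on a proper subset of $S$ before size $s>a+1$ is processed. The rejection rule then prevents $B_{S,c}$ from being added, contradicting that it was kept. Therefore $d_H(\{v\}\cup T)<Q^{k-a-1}$, which is \eqref{eq:hierarchy_bound}.

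The argument is the same for $s=k$, where $S=e$: $U=e\setminus\{v\}$ is an edge of $F_{k,v}$ once (b) holds. The point needing care is the monotone processing order, which guarantees that the blocking constraint on $S'$ or a subset of it was already present when $S$ was considered. Condition (b) concerns only subsets containing $v$, which is exactly the form of $S'$, so this argument covers it.
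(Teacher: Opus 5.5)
Your proof is correct and follows the paper's argument essentially step for step. You use the injective map $B_{S,c}\mapsto S\setminus\{v\}$, get condition (a) from part (ii) of \Cref{lem:active-core-detection}, and prove condition (b) by contradiction using part (i) together with the increasing-size processing order. You also check explicitly that $2\le a+1\le s-1\le k-1$, so that the detection lemma applies to $\{v\}\cup T$, a detail the paper leaves implicit.
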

\begin{proof}
First let \(s<k\), and consider a kept constraint \(B_{S,c}\) with \(v\in S\). Write \(S=\{v\}\cup U\). Since \(S\) was a candidate, \(\mathcal E_{\mathrm{det}}\) implies \(d_H(S)>Q^{k-s}/4\), which is condition (a).

To check condition (b), suppose a smaller set \(T\subsetneq S\), with \(v\in T\) and \(|T|\ge2\), had \(d_H(T)\ge Q^{k-|T|}\). The color \(c\) is common to \(S\), so it is also common to \(T\). On \(\mathcal E_{\mathrm{det}}\), the set \(T\) would therefore be detected. Its constraint \(B_{T,c}\), or a still smaller one making it unnecessary, would be processed before \(B_{S,c}\). This would cause \(B_{S,c}\) to be discarded, a contradiction. Hence condition (b) holds. Finally, \(c\in L(U)\), so \(U\in E(F_{s,v,c})\).

For \(s=k\), the same argument applies to a kept constraint \(B_{e,c}\): writing \(e=\{v\}\cup U\), condition (a) is automatic, condition (b) follows from the same smaller-set argument, and \(c\in L(e)\) gives \(U\in E(F_{k,v,c})\). Distinct constraints can be mapped to distinct edges, proving the bound.
\end{proof}

\subsection{Proof of the streaming theorem}
\label{sec:proof-uniform}

\begin{lemma}[Degree hierarchy for the auxiliary hypergraphs]
\label{lem:streaming-residual-hierarchy}
If \(A\ge162\), then the auxiliary hypergraphs defined above satisfy, for every \(k\ge2\), every \(v\in V\), and every \(2\le s\le k\),
\[
e(F_{s,v})\le D^{s-1},
\]
and, for \(1\le a\le s-1\),
\[
    \max_{|T|=a} d_{F_{s,v}}(T)
    \le 4\cdot 81^{k-2} D^{s-1-a}.
\]
\end{lemma}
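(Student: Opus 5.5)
The plan is to prove both bounds by one double-counting argument. Each edge \(U\) of \(F_{s,v}\) is heavy by condition (a): \(\{v\}\cup U\) lies in more than \(Q^{k-s}/4\) input edges. So a large number of auxiliary edges through a set \(T\) would force \(\{v\}\cup T\) itself to have large codegree. For nonempty \(T\), condition (b) caps that codegree. For \(T=\emptyset\), the cap comes from the maximum degree. Throughout, \(Q=80D\), and \(Q\ge A\Delta^{1/(k-1)}\) gives \(\Delta\le (Q/A)^{k-1}\).

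\emph{Case \(s<k\).} Fix \(T\subseteq V\setminus\{v\}\) with \(|T|=a\), where \(0\le a\le s-1\). Count pairs \((U,e)\) such that \(U\in E(F_{s,v})\), \(T\subseteq U\), and \(e\) is an input edge with \(\{v\}\cup U\subseteq e\).
\begin{itemize}
\item By condition (a), each auxiliary edge \(U\) appears in more than \(Q^{k-s}/4\) pairs.
\item Conversely, every such \(e\) contains \(\{v\}\cup T\). Inside \(e\), the set \(U\) is obtained by adding \(s-1-a\) vertices chosen from the \(k-1-a\) vertices of \(e\setminus(\{v\}\cup T)\). So \(e\) lies in at most \(\binom{k-1-a}{s-1-a}\) pairs.
\end{itemize}
Hence
\[
d_{F_{s,v}}(T)\cdot \frac{Q^{k-s}}{4}\le \binom{k-1-a}{s-1-a}\, d_H(\{v\}\cup T).
\]

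For \(1\le a<s-1\), whenever \(d_{F_{s,v}}(T)>0\), the set \(T\) is a nonempty proper subset of some auxiliary edge. Condition (b) then gives \(d_H(\{v\}\cup T)<Q^{k-a-1}\), so
\[
d_{F_{s,v}}(T)\le 4\binom{k-1-a}{s-1-a}80^{s-1-a}D^{s-1-a}.
\]
By the binomial theorem, \(\binom{k-1-a}{j}80^{j}\le 81^{k-1-a}\le 81^{k-2}\) since \(a\ge1\), which gives the claimed bound \(4\cdot81^{k-2}D^{s-1-a}\). For \(a=s-1\), trivially \(d_{F_{s,v}}(T)\le 1\). For \(a=0\), use \(d_H(\{v\})\le\Delta\le (Q/A)^{k-1}\) to get
\[
e(F_{s,v})\le \frac{4\binom{k-1}{s-1}80^{s-1}}{A^{k-1}}\,D^{s-1}\le \frac{4\cdot 81^{k-1}}{A^{k-1}}\,D^{s-1}.
\]
Here \(s<k\) forces \(k\ge3\), so \(4\le 2^{k-1}\) and \(4\cdot81^{k-1}\le 162^{k-1}\le A^{k-1}\). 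This gives \(e(F_{s,v})\le D^{s-1}\).

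\emph{Case \(s=k\).} The edges of \(F_{k,v}\) are sets \(e\setminus\{v\}\) for distinct input edges \(e\ni v\), so the map \(U\mapsto\{v\}\cup U\) is injective. We can therefore count directly, with no factor \(4\) and no binomial.
\begin{itemize}
\item \(e(F_{k,v})\le\Delta\le (80/A)^{k-1}D^{k-1}\le D^{k-1}\), since \(A\ge 80\).
\item For \(1\le a<k-1\), condition (b) gives \(d_{F_{k,v}}(T)\le d_H(\{v\}\cup T)<Q^{k-1-a}=80^{k-1-a}D^{k-1-a}\le 81^{k-2}D^{k-1-a}\).
\item For \(a=k-1\), the codegree is at most \(1\).
\end{itemize}
This case in particular covers \(k=2\).

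The main obstacle is the edge-count bound with the absolute constant \(A=162\). The binomial sum introduces a factor \(81^{k-1}\) and condition (a) loses a factor \(4\). Absorbing both into \(A^{k-1}\) works only when \(k\ge3\). That is why the case \(s=k\), including \(k=2\), must be treated separately by direct counting rather than through the double-counting inequality.
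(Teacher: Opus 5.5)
Your proof is correct and uses essentially the same double-counting argument as the paper: conditions (a) and (b), the bound $\Delta\le (Q/A)^{k-1}$, and the binomial theorem to produce the $81^{k-1}$ and $81^{k-2}$ factors. The only difference is how the cases are split, and here your closing remark is slightly overstated. The paper separates only $k=2$ and runs the double count for every $s$ when $k\ge3$, including $s=k$, where the factor $4$ is still absorbed because $162^{k-1}\ge 4\cdot 81^{k-1}$; so treating $s=k$ by direct injective counting is valid and a bit tighter, but it is not forced when $k\ge3$.
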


\begin{proof}
For \(k=2\), necessarily \(s=2\). In this case,
\[
e(F_{2,v})=d_H(v)\le \Delta\le Q/A\le D
\]
whenever \(A\ge 80\). Moreover, for every singleton \(T\), we trivially have
\[
d_{F_{2,v}}(T)\le 1< 4\cdot 81^{k-2}D^{s-1-|T|}.
\]
Now assume \(k\ge3\). A set of size \(s-1\) belongs to at most one edge of an \((s-1)\)-uniform hypergraph, so the codegree bound is immediate when \(a=s-1\). We therefore consider \(a\le s-2\).

First we bound the total number of auxiliary edges. Every \(U\in E(F_{s,v})\) extends with \(v\) to a set contained in more than \(Q^{k-s}/4\) input edges. Conversely, an input edge containing \(v\) has exactly \(\binom{k-1}{s-1}\) subsets of size \(s\) containing \(v\). Counting these incidences in the two orders gives
\begin{align*}
    e(F_{s,v})\cdot Q^{k-s}/4&<\sum_{S\ni v,|S|=s}d_H(S)=\binom{k-1}{s-1}d_H(\{v\})\le\binom{k-1}{s-1}\Delta \tag{double counting}\\
    &\le \binom{k-1}{s-1}\left(\frac{Q}{A}\right)^{k-1}=\binom{k-1}{s-1}\left(\frac{Q}{D}\right)^{s-1}\frac{Q^{k-s}D^{s-1}}{A^{k-1}}\tag{\Cref{eq:definition_of_Q}}\\
    &\le \left(1+\frac{Q}{D}\right)^{k-1}\frac{Q^{k-s}D^{s-1}}{A^{k-1}}\tag{Binomial theorem}\\
    &=\left(\frac{81}{A}\right)^{k-1}Q^{k-s}D^{s-1}\le Q^{k-s} D^{s-1}/4.
\end{align*}
Next fix a set \(T\subseteq V\setminus\{v\}\) of size \(1\le a\le s-2\). If no auxiliary edge contains \(T\), its codegree is zero. Otherwise condition (b) bounds \(d_H(\{v\}\cup T)\). Repeating the same count, now only for sets containing \(\{v\}\cup T\), gives
\begin{align*}
    d_{F_{s,v}}(T)\cdot Q^{k-s}/4&<\sum_{S\supseteq \{v\}\cup T,|S|=s}d_H(S)=\binom{k-a-1}{s-a-1}d_H(\{v\}\cup T)\tag{double counting}\\
    &<\binom{k-a-1}{s-a-1}Q^{k-a-1}=\binom{k-a-1}{s-a-1}
    \left(\frac{Q}{D}\right)^{s-a-1}
    Q^{k-s}D^{s-a-1}\tag{\Cref{eq:hierarchy_bound}}\\
    &\le\left(1+\frac{Q}{D}\right)^{k-a-1}
    Q^{k-s}D^{s-a-1}\tag{Binomial theorem}\\
    &\le 81^{k-2}Q^{k-s}D^{s-a-1}.
    \qedhere
\end{align*}
\end{proof}

The auxiliary hypergraphs satisfy the desired degree hierarchy, and their color-specific edge counts bound the local constraint counts. Combining these facts shows that the colors removed by the algorithm form a subset of the overloaded colors, and hence at least half of every list remains.

\begin{corollary}
\label{cor:palette-trimming}
    For a sufficiently large \(K_k\) depending on \(k\), w.p. at least \(1-n^{-10}\) over the sampled lists and random core counts, simultaneously for every vertex \(v\),
    \[
    |L(v)\setminus\mathrm{Bad}(v)|\ge \lfloor\ell/2\rfloor.
    \]
\end{corollary}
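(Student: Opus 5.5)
The plan is to combine \Cref{lem:active-core-detection}, \Cref{lem:domination}, \Cref{lem:streaming-residual-hierarchy}, and \Cref{lem:palette-trimming}. First, by \Cref{lem:streaming-residual-hierarchy} (with \(A\ge162\)), the auxiliary hypergraphs \(F_{s,v}\) satisfy \Cref{ass:residual-hierarchy} with \(\Gamma:=4\cdot81^{k-2}\), a constant depending only on \(k\). Since \(F_{s,v}\) is defined from \(H\) and the fixed thresholds alone, it is fixed independently of the sampled lists, as the assumption requires. Applying \Cref{lem:palette-trimming} with this \(\Gamma\), and choosing \(K_k\ge K_\Gamma\) large enough, gives with probability \(1-n^{-\Omega_k(1)}\) over the lists that \(|L(v)\setminus\widetilde{\mathrm{Bad}}(v)|\ge\lfloor\ell/2\rfloor\) for all \(v\). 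Enlarging \(K_k\) makes the exponent at least \(11\). This works because the failure probability in that lemma is \(n\cdot e^{-\Omega_\Gamma(\ell^2)}\) with \(\ell^2\ge K_k^2\log n\).

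Second, I will show the containment \(\mathrm{Bad}(v)\subseteq\widetilde{\mathrm{Bad}}(v)\) on \(\mathcal E_{\mathrm{det}}\). If \(c\in\mathrm{Bad}(v)\), then \(N_s(v,c)>\Lambda^{s-1}\) for some \(s\). By \Cref{lem:domination}, \(e(F_{s,v,c})\ge N_s(v,c)>\Lambda^{s-1}\), so \(c\) is overloaded for \(v\). Both sets are subsets of \(L(v)\), so the containment gives \(|L(v)\setminus\mathrm{Bad}(v)|\ge|L(v)\setminus\widetilde{\mathrm{Bad}}(v)|\).

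Third, I combine the probabilities. \Cref{lem:active-core-detection} holds for every fixed list assignment with failure probability at most \(n^{-20}\) over the counting coins. Averaging over the lists therefore gives \(\Prb[\overline{\mathcal E_{\mathrm{det}}}]\le n^{-20}\) unconditionally. A union bound with the list event from the first step gives total failure probability at most \(n^{-20}+n^{-11}\le n^{-10}\) for large \(n\).

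The main obstacle is bookkeeping rather than the argument itself. The lemmas above must apply in the current regime, namely \(D,\Lambda>1\), \(Q\ge2\ell\), and sampling probabilities at most one. Also, \Cref{lem:literal-loads} requires \(\ell\) to be large relative to \(\Gamma\), which holds for large \(n\) once \(K_k\) is fixed. I also need that the list event and the detection event may be intersected without any independence between them, and the union bound handles this. The only other point to check is that \(\ell\) here is \(\lceil K_k\sqrt{\log n}\rceil\) exactly as in \Cref{lem:palette-trimming}, which holds since we are in the case \(Q\ge2\ell\), so no truncation to \(Q\) occurs.
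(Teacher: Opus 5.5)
Your proposal is correct and matches the paper's proof. The paper likewise verifies Assumption~\ref{ass:residual-hierarchy} with $\Gamma=4\cdot 81^{k-2}$ via \Cref{lem:streaming-residual-hierarchy}, applies \Cref{lem:palette-trimming}, uses \Cref{lem:domination} on $\mathcal E_{\mathrm{det}}$ to get $\mathrm{Bad}(v)\subseteq\widetilde{\mathrm{Bad}}(v)$, and handles the possible dependence between the events through the per-list guarantee $\Prb(\mathcal E_{\mathrm{det}}\mid L)\ge 1-n^{-20}$. The only cosmetic difference is that you finish with a union bound, whereas the paper writes $\Prb(\mathcal E_{\mathrm{trim}}\wedge\mathcal E_{\mathrm{det}})=\E_L[\mathbf 1_{\mathcal E_{\mathrm{trim}}}\Prb(\mathcal E_{\mathrm{det}}\mid L)]$ and multiplies the two bounds; both are valid.
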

\begin{proof}
By \Cref{lem:streaming-residual-hierarchy}, the deterministic auxiliary hypergraphs $F_{s,v}$ satisfy Assumption~\ref{ass:residual-hierarchy} with $\Gamma=4\cdot 81^{k-2}$. Hence, by \Cref{lem:palette-trimming}, w.p. at least $1-n^{-\Omega_k(1)}$ over the sampled lists,
\[
    {\mathcal E_{\mathrm{trim}}}:=\left\{|L(v)\setminus\widetilde{\mathrm{Bad}}(v)|\ge \lfloor\ell/2\rfloor\text{ for every }v\in V\right\}
\]
holds. The event that enough colors remain and the detection event need not be independent. We instead use the guarantee that detection succeeds for every fixed list assignment \(L\):
\[
    \Prb({\mathcal E_{\mathrm{det}}}\mid L)\ge 1-n^{-20}.
\]
Moreover, on the event ${\mathcal E_{\mathrm{det}}}$, \Cref{lem:domination} implies that $\mathrm{Bad}(v)\subseteq\widetilde{\mathrm{Bad}}(v)$ for every $v\in V$. Therefore,
\[
    \Prb({\mathcal E_{\mathrm{trim}}}\land{\mathcal E_{\mathrm{det}}})=\mathbb E_L\!\left[\mathbf 1_{{\mathcal E_{\mathrm{trim}}}}\Prb({\mathcal E_{\mathrm{det}}}\mid L)\right]\ge (1-n^{-20})\Prb({\mathcal E_{\mathrm{trim}}})\ge (1-n^{-20})(1-n^{-\Omega_k(1)})\ge 1-n^{-10},
\]
where the last inequality holds when $K_k$ is sufficiently large. On ${\mathcal E_{\mathrm{trim}}}\land{\mathcal E_{\mathrm{det}}}$, for every $v\in V$,
\[
    |L(v)\setminus\mathrm{Bad}(v)|\ge |L(v)\setminus\widetilde{\mathrm{Bad}}(v)|\ge \lfloor\ell/2\rfloor.
    \qedhere
\]
\end{proof}


\begin{proof}[Proof of \Cref{thm:uniform}]
If \(Q<2\ell\), then
\[ 
\Delta=O(\ell^{k-1})=\polylog n;
\] 
If \(Q^{k-s}<R_k\log n\) for some \(2\le s\le k-1\), then 
\[ 
Q<\left(R_k\log n\right)^{1/(k-s)}=O_k\bigl((\log n)^{1/(k-s)}\bigr), 
\]
and hence again \(\Delta=\polylog n\). In both corner cases, the entire hypergraph can be stored in \(\widetilde O_k(n)\) bits. Then a standard LLL argument, together with Moser--Tardos resampling, directly gives a proper \(Q\)-coloring. During postprocessing, the stored hypergraph is supplemented only by the current coloring and scan indices, which require \(O(n\log Q)+O_k(\log n)=\widetilde O_k(n)\) additional bits.

In the remaining case, \Cref{cor:palette-trimming} guarantees w.p. at least \(1-n^{-10}\) that every vertex has at least \(\lfloor\ell/2\rfloor\) colors outside \(\mathrm{Bad}(v)\). Choose any \(m=\lfloor\ell/2\rfloor\) of these colors as \(G(v)\). Every color $c\in G(v)$ satisfies \(N_s(v,c)\le\Lambda^{s-1}\) for all \(s\). Since \(\Lambda=\ell/40\le m/10\), \Cref{lem:lll-verification} applies: \(\textsc{LLL-Solver}\) finds an assignment avoiding every event in \(\calB_G\). By \Cref{lem:kernel-correct}, this is a proper coloring of \(H\).

The space estimate so far is in expectation. Run \(\lceil20\log n\rceil\) independent copies of \Cref{alg:hypergraph-kernel} on the same stream, and stop a copy if its memory would exceed four times the expected-space upper bound from \Cref{lem:space}. Markov's inequality bounds the probability of stopping a copy by \(1/4\). The probability that its lists lose too many colors is at most \(n^{-10}\). Thus each copy stays within its memory limit and has enough colors left w.p. at least \(1/2\). Independence implies that at least one such copy exists w.p. at least \(1-n^{-10}\). The extra factor of \(O(\log n)\) copies still leaves total space \(\widetilde O_k(n)\).

We process the surviving copies one at a time and reuse one array for the current coloring. The constraints can be simplified and their counts computed by repeated scans of the stored data. During resampling, another scan finds a violated constraint; no graph of conflicts needs to be stored. Apart from the capped copies, the coloring array and a constant number of counters and scan pointers use \(O(n\log Q)+O_k(\log n)=\widetilde O_k(n)\) additional bits. Each scan takes time polynomial in the capped copy size, and \Cref{thm:lll-solver} gives an expected \(O(|\calB_G|)=\widetilde O_k(n)\) resamplings. The expected postprocessing time is therefore polynomial.
\end{proof}

For edges of different sizes, we run the same construction separately for each size and combine the stored constraints before coloring. Appendix~\ref{sec:bounded_rank_proof} checks that the space and probability bounds continue to hold.

\begin{proof}[Proof of \Cref{cor:bounded-rank}]
    See Appendix~\ref{sec:bounded_rank_proof}.
\end{proof}

\section{Palette sparsification for uniform hypergraphs}\label{sec:compression}

This section proves \Cref{thm:palette_sparsification}: short independent random lists preserve colorability even without restrictions on how the input edges overlap. We reuse the concentration and LLL completion from \Cref{sec:overloaded_are_rare,sec:lll}. Here there is no streaming restriction, so we can inspect all edges at once. Instead of estimating which small sets belong to many edges, we find those sets exactly.

\subsection{Deterministic constraint compression}

We construct an auxiliary hypergraph whose edges may have different sizes. Initially its edges are the input edges. Whenever many current edges contain the same smaller set \(T\), we replace all edges containing \(T\) by \(T\) itself. 
The following definition specifies when to make a compression. 

\begin{definition}[Compressed constraints sequence]
We recursively construct a sequence of collections of nonempty subsets of $V$, $\{\cA_0,\cA_1,\ldots\}$, starting from $\cA_0:=E$. Given $\cA_i$, if there exist a nonempty set $T_i\subseteq V$, with $|T_i|=t_i\ge 1$, and an integer $s_i>t_i$ such that 
\begin{align}\label{eq:nonsmooth_constraint}
    |\{S\in\cA_i:T_i\subseteq S,\ |S|=s_i\}|>D^{s_i-t_i},
\end{align}
then define
\begin{align}\label{eq:compress_operation}
    \cA_{i+1}:=
    \left(\cA_i\setminus\{S\in\cA_i:T_i\subseteq S\}\right)\cup\{T_i\}.
\end{align}
\end{definition}

We now prove several properties. The first property says that the current family is always an \emph{antichain}: no two distinct sets contain one another. It prevents us from keeping an edge whose coloring constraint is already implied by a smaller edge.

\begin{lemma}\label{lem:no_antichain}
For every $i\in\mathbb N$, if $\cA_i$ exists, then $\cA_i$ is an antichain; that is, for all distinct $S_1,S_2\in\cA_i$, we have $S_1\not\subseteq S_2$.
\end{lemma}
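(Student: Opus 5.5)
The plan is to argue by induction on \(i\).

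For the base case \(i=0\), we have \(\cA_0=E\). Every input edge has exactly \(k\) vertices, and the input has no repeated edges. If \(S_1\subseteq S_2\) with \(|S_1|=|S_2|=k\), then \(S_1=S_2\). So distinct edges are incomparable, and \(\cA_0\) is an antichain.

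For the inductive step, assume \(\cA_i\) is an antichain and that \(\cA_{i+1}\) exists, so it is defined by \eqref{eq:compress_operation} with some \(T_i\). Set
\[
\cA':=\cA_i\setminus\{S\in\cA_i:T_i\subseteq S\}.
\]
Then \(\cA_{i+1}=\cA'\cup\{T_i\}\). The set \(\cA'\) is a subfamily of an antichain, so any two distinct members of \(\cA'\) are incomparable. It remains to compare \(T_i\) with each \(S\in\cA'\), in both directions, and I check the two directions separately.

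First, \(T_i\not\subseteq S\). Every member of \(\cA_i\) containing \(T_i\) was removed, so no member of \(\cA'\) contains \(T_i\).

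Second, \(S\not\subseteq T_i\). This is the only step needing care. By \eqref{eq:nonsmooth_constraint}, the set of members of \(\cA_i\) of size \(s_i\) containing \(T_i\) has more than \(D^{s_i-t_i}\ge 0\) elements, so it is nonempty. Hence there is some \(S^*\in\cA_i\) with \(T_i\subseteq S^*\), and \(|S^*|=s_i>t_i\), so \(T_i\subsetneq S^*\). Now suppose \(S\subseteq T_i\) for some \(S\in\cA'\). Then \(S\subseteq T_i\subsetneq S^*\), so \(S\) and \(S^*\) are distinct members of \(\cA_i\) with \(S\subsetneq S^*\). This contradicts the antichain property of \(\cA_i\).

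The same observation also rules out a degenerate case: \(T_i\) cannot already be a member of \(\cA'\). If it were, it would contain itself and so would have been removed. Also, \(T_i\) is nonempty by definition. Together these checks show that \(\cA_{i+1}\) is an antichain, which completes the induction.
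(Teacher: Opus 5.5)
Your proof is correct and follows the same induction as the paper. The base case uses $k$-uniformity with no repeated edges, and the inductive step combines two facts: the nonempty family of supersets of $T_i$ rules out any member of $\cA_i$ lying inside $T_i$, and every superset of $T_i$ is removed. Your explicit observation that $T_i\subsetneq S^*$ because $|S^*|=s_i>t_i$ spells out a step the paper leaves implicit.
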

\begin{proof}
Since $H$ is $k$-uniform and has no repeated edges, $\cA_0$ is an antichain. Suppose $\cA_i$ is an antichain and $\cA_{i+1}$ exists. The only newly inserted set is $T_i$. The family $\{S\in\cA_i:T_i\subseteq S\}$ is nonempty, otherwise \Cref{eq:nonsmooth_constraint} does not hold. Since $\cA_i$ is an antichain, there is no $S'\in\cA_i$ with $S'\subseteq T_i$. Moreover, every set of $\cA_i$ containing $T_i$ is removed. Hence $\cA_{i+1}$ is again an antichain. The claim follows by induction.
\end{proof}

\begin{lemma}[Final degree hierarchy]\label{lem:finite_termination}
The compression process terminates after finitely many steps, say at $t^\star\in\mathbb N$. Moreover, for every nonempty $T\subseteq V$ and every $|T|\le s\le k$,
\begin{align}\label{eq:termination_property}
    |\{S\in\cA_{t^\star}:T\subseteq S,\ |S|=s\}|\le D^{s-|T|}.
\end{align}
\end{lemma}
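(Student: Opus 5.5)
Termination will follow from a potential function that strictly decreases with each compression step. Take \(\Phi(\cA):=\sum_{S\in\cA}2^{|S|}\), or alternatively the multiset of set sizes ordered lexicographically starting from the largest size. A step with \(T_i\) and \(s_i\) removes more than \(D^{s_i-t_i}\ge 1\) sets (because \(D>1\)), so at least one set \(S\supseteq T_i\) with \(|S|=s_i>t_i\) is removed. Only the single set \(T_i\) is added. We could use \(\Phi\) directly, but a cleaner route is the vector \((n_k,n_{k-1},\dots,n_1)\) of counts of sets of each size, compared lexicographically from the largest size down. The step deletes at least one set of size \(s_i>t_i\), and every deleted set has size at least \(t_i\). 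The count at the largest size that changes therefore goes down: either it is some size above \(t_i\) losing sets, or, if every deleted set has size exactly \(t_i\), that contradicts the removal of a set of size \(s_i>t_i\). Adding \(T_i\) changes only \(n_{t_i}\), which is lower in the order. So the vector decreases strictly. The set sizes never exceed \(k\), because the sizes start at \(k\) and only \(T_i\subseteq S\) is inserted, so each set is a subset of some earlier set. Together with the antichain property of \Cref{lem:no_antichain}, this means every \(\cA_i\) is a family of subsets of \(V\) of size at most \(k\). Finitely many such families exist, and a lexicographically decreasing sequence over a finite set is finite. Hence the process stops at some \(t^\star\).

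For the degree bound, the process stops exactly when no pair \((T,s)\) with \(T\) nonempty and \(s>|T|\) satisfies \Cref{eq:nonsmooth_constraint}. So for every nonempty \(T\) and every \(s>|T|\), the count \(|\{S\in\cA_{t^\star}:T\subseteq S,\ |S|=s\}|\) is at most \(D^{s-|T|}\). The remaining case is \(s=|T|\). There the only possible \(S\) is \(T\) itself, so the count is at most \(1=D^0\). This gives \Cref{eq:termination_property} for all \(|T|\le s\le k\). Sets of size above \(k\) never appear, and the statement only asks for \(s\le k\) anyway.

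The main obstacle is the termination argument, because \(T_i\) could have been added and later removed, so a naive "sizes only shrink" argument has to be made precise. The lexicographic potential above handles this. The one point to check carefully is that at least one deleted set has size strictly greater than \(t_i\). This follows from \Cref{eq:nonsmooth_constraint} with \(D^{s_i-t_i}\ge1\), which gives at least one set of size \(s_i\).
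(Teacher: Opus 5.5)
Your proof is correct and follows the same route as the paper: termination via a strictly decreasing potential, then the negation of \eqref{eq:nonsmooth_constraint} at $t^\star$ for $s>|T|$, plus the trivial case $s=|T|$. The only difference is the choice of potential. The paper uses the integer $a_i=\sum_{S\in\cA_i}|S|$, which drops at each step because the removed sets of size $s_i>t_i$ outweigh the single inserted $T_i$; your lexicographic size-count vector (or $\sum_{S}2^{|S|}$) does the same job, and your appeal to the antichain property is not actually needed.
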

\begin{proof}
We track the total number of vertex occurrences in the current family:
\[
    a_i:=\sum_{S\in\cA_i}|S|.
\]
Then $a_i\in\mathbb N$, and
\begin{align*}
    a_i-a_{i+1}
    &=-t_i+\sum_{S\in\cA_i:T_i\subseteq S}|S|\ge -t_i+\sum_{S\in\cA_i:T_i\subseteq S,\ |S|=s_i}|S|>-t_i+s_iD^{s_i-t_i}>0,
\end{align*}
where the last inequality uses $D>1$ and $s_i>t_i$. Thus the process terminates in finitely many steps. At termination, the negation of \Cref{eq:nonsmooth_constraint} gives \Cref{eq:termination_property} for $s>|T|$, while the case $s=|T|$ is immediate.
\end{proof}

The next lemma shows that every earlier set contains some later set. 

\begin{lemma}\label{lem:iterative_avoiding}
For every $i\le j\le t^\star$ and every $S_1\in\cA_i$, there exists $S_2\in\cA_j$ such that $S_2\subseteq S_1$.
\end{lemma}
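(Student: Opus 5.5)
The plan is induction on $j$ with $i$ fixed, using nothing beyond the single-step compression rule \Cref{eq:compress_operation}. The base case $j=i$ is trivial: take $S_2:=S_1$. The whole argument then reduces to a one-step claim: for every $j<t^\star$ and every $S\in\cA_j$, some $S'\in\cA_{j+1}$ satisfies $S'\subseteq S$. Given this claim, transitivity of inclusion finishes the induction. If $S_2\in\cA_j$ satisfies $S_2\subseteq S_1$, the claim applied to $S_2$ yields $S_2'\in\cA_{j+1}$ with $S_2'\subseteq S_2\subseteq S_1$.

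To prove the one-step claim, fix $S\in\cA_j$ and split into two cases according to whether $T_j\subseteq S$.
\begin{enumerate}
\item If $T_j\not\subseteq S$, then $S$ is not in the removed family $\{S''\in\cA_j:T_j\subseteq S''\}$. By \Cref{eq:compress_operation}, $S$ survives into $\cA_{j+1}$, so $S':=S$ works.
\item If $T_j\subseteq S$, then $S$ is removed, but $T_j$ is inserted into $\cA_{j+1}$. So $S':=T_j$ works, since $T_j\subseteq S$.
\end{enumerate}

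There is essentially no obstacle; the only care needed is in the bookkeeping. The induction should run over $j$ from $i$ up to $t^\star$, so that every $\cA_{j+1}$ involved exists. One could also note that the containment is proper in the second case whenever $S\neq T_j$, but the lemma does not require this. The antichain property of \Cref{lem:no_antichain} is not needed here.
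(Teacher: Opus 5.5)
Your proof is correct and is essentially the paper's argument: induction on $j$ with $i$ fixed, where in the inductive step the current subset either survives the compression or contains $T_j$, and then $T_j\in\cA_{j+1}$ serves as the smaller set. Splitting this into a one-step claim plus transitivity of inclusion is only a presentational difference.
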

\begin{proof}
Fix $i$. The statement is trivial for $j=i$. Suppose it holds for $j=m$. For $S_1\in\cA_i$, choose $S_2\in\cA_m$ with $S_2\subseteq S_1$. If $S_2$ is not removed in the $(m+1)$-st compression step, then $S_2\in\cA_{m+1}$. Otherwise $T_m\subseteq S_2\subseteq S_1$, and $T_m\in\cA_{m+1}$. The claim follows by induction.
\end{proof}

Finally, we rule out a one-vertex set in the final family.

\begin{lemma}\label{lem:no_skeleton}
There is no singleton $\{v\}\in\cA_{t^\star}$ when $A\ge 80$.
\end{lemma}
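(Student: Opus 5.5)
The plan is to track a potential attached to the fixed vertex \(v\) and show that it never increases during compression. Creating the singleton \(\{v\}\) would force the potential above its initial value, which is a contradiction.

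\textbf{The potential.} For each \(i\), set
\[
\Phi_i(v):=\sum_{S\in\cA_i,\ v\in S} D^{k-|S|}.
\]
Initially \(\cA_0=E\), so every set containing \(v\) has size \(k\). Hence \(\Phi_0(v)=d_H(\{v\})\le\Delta\). By the definition of \(Q\) we have \(\Delta\le (Q/A)^{k-1}\). Since \(A\ge 80\), this gives \(Q/A\le Q/80=D\), and therefore \(\Phi_0(v)\le D^{k-1}\).

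\textbf{Monotonicity.} Next I show that \(\Phi_{i+1}(v)\le\Phi_i(v)\) for each compression step \((T_i,s_i)\). There are two cases.
\begin{itemize}
\item If \(v\notin T_i\), the step only deletes sets, some of which may contain \(v\). The inserted set \(T_i\) does not contain \(v\), so the potential cannot increase.
\item If \(v\in T_i\), every removed set contains \(T_i\) and hence \(v\). By \eqref{eq:nonsmooth_constraint}, more than \(D^{s_i-t_i}\) of the removed sets have size \(s_i\). Each contributes \(D^{k-s_i}\), so the removed potential exceeds \(D^{k-t_i}\). The inserted set \(T_i\) contributes exactly \(D^{k-t_i}\). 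Therefore \(\Phi_{i+1}(v)<\Phi_i(v)\).
\end{itemize}
By induction, \(\Phi_i(v)\le D^{k-1}\) for all \(i\le t^\star\).

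\textbf{Conclusion.} Since \(H\) is \(k\)-uniform with \(k\ge 2\), \(\cA_0\) contains no singleton. So if \(\{v\}\in\cA_{t^\star}\), it was inserted at some step \(i\) with \(T_i=\{v\}\), \(t_i=1\) and some \(s_i>1\). By \eqref{eq:nonsmooth_constraint}, \(\cA_i\) then contains more than \(D^{s_i-1}\) sets of size \(s_i\) that contain \(v\). Their total contribution to \(\Phi_i(v)\) exceeds \(D^{s_i-1}\cdot D^{k-s_i}=D^{k-1}\). This contradicts \(\Phi_i(v)\le D^{k-1}\).

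\textbf{Main obstacle.} The key step is choosing weights so that the compression rule exactly preserves or decreases the potential. The weight \(D^{k-|S|}\) is tuned so that more than \(D^{s-t}\) sets of size \(s\) carry strictly more weight than the single inserted set of size \(t\). The only other point to check is the initial bound \(\Phi_0(v)\le D^{k-1}\), which is where the hypothesis \(A\ge 80\) is used.
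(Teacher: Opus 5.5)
Your proof is correct and is essentially the paper's argument: the paper uses the same weighted potential $b_i=\sum_{s=2}^k|\{S\in\cA_i: v\in S,\ |S|=s\}|\,D^{k-s}$, shows it never increases under a compression step, and reaches a contradiction with $b_0=d_H(v)\le\Delta\le D^{k-1}$ at the step that creates $\{v\}$. The only cosmetic difference is that your $\Phi_i(v)$ also counts sets of size one, which is harmless and gives you strict decrease when $v\in T_i$.
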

\begin{proof}
Suppose that $\{v\}\in\cA_{t^\star}$. Then for some $i'<t^\star$ and some $s_{i'}>1$,
\[
    |\{S\in\cA_{i'}:v\in S,\ |S|=s_{i'}\}|>D^{s_{i'}-1}.
\]
Define
\[
    b_i:=\sum_{s=2}^k
    |\{S\in\cA_i:v\in S,\ |S|=s\}|D^{k-s}.
\]
By \Cref{eq:nonsmooth_constraint},
\begin{align*}
    b_i-b_{i+1}
    &\ge -\mathbf 1(v\in T_i)D^{k-t_i}
    +\sum_{\substack{S\in\cA_i:\ |S|=s_i,\ T_i\subseteq S}}
      \mathbf 1(v\in S)D^{k-s_i}\ge0.
\end{align*}
Indeed, if $v\notin T_i$, the first term vanishes; if $v\in T_i$, then the second term is strictly larger than $D^{s_i-t_i}D^{k-s_i}=D^{k-t_i}$. Hence $b_i$ is non-increasing. Therefore
\[
    b_0\ge b_{i'}
    >D^{s_{i'}-1}D^{k-s_{i'}}
    =D^{k-1}
    \ge (D/Q\cdot A\Delta^{1/(k-1)})^{k-1}
    \ge\Delta,
\]
where the last inequality holds when $A\ge 80$. On the other hand, since $\cA_0=E$ is $k$-uniform, $b_0=d_H(v)\le\Delta$, a contradiction.
\end{proof}

We now express the final degree hierarchy using the auxiliary hypergraphs from \Cref{sec:overloaded_are_rare}.

\paragraph{Auxiliary hypergraphs.} Fix \(v\) and \(s\). Take every final \(s\)-vertex set containing \(v\), and remove \(v\) from it. These sets form the edge family of \(F^{\mathrm{off}}_{s,v}\), an \((s-1)\)-uniform hypergraph on \(V\setminus\{v\}\):
\begin{align}\label{eq:offline-residual-link}
    E(F^{\mathrm{off}}_{s,v})
    :=\{S\setminus\{v\}:S\in\cA_{t^\star},\ |S|=s,\ v\in S\}.
\end{align}
An auxiliary edge containing \(T\) corresponds to a final set containing \(T\cup\{v\}\). Thus \Cref{lem:finite_termination} gives, for every \(0\le a\le s-1\),
\begin{align}\label{eq:offline-residual-degree}
    \max_{\substack{T\subseteq V\setminus\{v\}\\ |T|=a}}
    d_{F^{\mathrm{off}}_{s,v}}(T)
    \le D^{s-1-a}.
\end{align}
where for $a=0$ the left-hand side equals $e(F^{\mathrm{off}}_{s,v})$.

\subsection{Proofs of the palette-sparsification results}

The degree hierarchy allows us to remove problematic colors while keeping at least half of each list. The LLL then colors every final set properly, and \Cref{lem:iterative_avoiding} applies.

\begin{proof}[Proof of \Cref{thm:palette_sparsification}] 
Let \(\ell=\min\{Q,\lceil K\sqrt{\log n}\rceil\}\).  If \(Q\le\lceil K\sqrt{\log n}\rceil\), then every list is the full palette \([Q]\), and the ordinary LLL coloring bound proves the claim. Hence assume \(\ell=\lceil K\sqrt{\log n}\rceil\).

Choose \(A\) sufficiently large and perform the replacements above. For every final set and each color common to its vertices' lists, record the corresponding bad event:
\[
    \mathcal B
    :=
    \{B_{S,c}: S\in\mathcal A_{t^\star},\ c\in L(S)\}.
\]
These are precisely the events we must avoid to color every final set properly. By \Cref{lem:finite_termination}, \(F_{s,v}^{\mathrm{off}}\) satisfies Assumption~\ref{ass:residual-hierarchy} with \(\Gamma=1\). Here the auxiliary counts are exact, rather than just upper bounds: for every $c\in L(v)$, \(N_s(v,c)=e(F_{s,v,c}^{\mathrm{off}})\). Consequently, \(\widetilde{\mathrm{Bad}}(v)=\mathrm{Bad}(v)\).

We can therefore apply \Cref{lem:palette-trimming}. When \(K\) is sufficiently large, w.h.p. every vertex \(v\) has at least \(\lfloor\ell/2\rfloor\) colors outside \(\mathrm{Bad}(v)\). Keep any \(m=\lfloor\ell/2\rfloor\) such colors in \(G(v)\). Then, for every \(c\in G(v)\) and \(2\le s\le k\),
\[
N_s(v,c)\le \Lambda^{s-1}.
\]
Since \(\Lambda/m\le1/10\) for our choice of constants, \Cref{lem:lll-verification} gives a coloring from the lists \(G(v)\) avoiding all these bad events.

It remains to check the original edges. If an original edge were monochromatic, \Cref{lem:iterative_avoiding} would give a final set contained in it. By \Cref{lem:no_skeleton}, this set has at least two vertices; it would be monochromatic in the same color, contradicting the avoidance of all its bad events. Hence the coloring is proper for \(H\).
\end{proof}

The bounded-rank consequence follows by reducing each edge to one of its $r$-subsets, which brings the problem back to the uniform theorem without increasing the maximum degree.

\begin{proof}[Proof of \Cref{cor:palette_sparsification_bounded}]
    For every edge \(e\in E(H)\), choose any \(r\)-subset \(e'\subseteq e\), and let \(H'\) be the resulting \(r\)-uniform hypergraph after removing repeated edges. A vertex belongs to no more edges in \(H'\) than in \(H\), so \(\Delta(H')\le\Delta(H)\le\Delta\). Apply \Cref{thm:palette_sparsification} to \(H'\) with the same lists. Every edge \(e'\) then contains at least two colors, so the original edge \(e\) containing it does too.
\end{proof}

\section{A deterministic one-pass lower bound}\label{sec:deterministic-lower-bound}

This section proves \Cref{thm:det-streaming-lb} by extending the result of Assadi, Chen, and Sun~\cite{assadi2022deterministic}, originally developed for graphs, to \(k\)-uniform hypergraphs.

Here we sketch the main idea. With \(S\) bits of memory, the algorithm has at most \(2^S\) possible states. Suppose it starts in a fixed state and reads a random subhypergraph of a base hypergraph \(B\), with edges sampled at rate \(\rho\) and maximum degree suitably bounded. Some final state \(\sigma\) must then be reached w.p. at least \(2^{-S}\). Let \(M(\sigma)\) consist of the edges of \(B\) absent from every admissible subhypergraph leading to \(\sigma\). If \(M(\sigma)\) were too large, avoiding all its edges under independent sampling would be too unlikely for \(\sigma\) to occur with probability at least \(2^{-S}\). \Cref{lem:missing-edge-compression} makes this precise and gives \(|M(\sigma)|=O((S+1)/\rho)\).

We iterate this argument over \(p\) stages. Starting from the complete \(k\)-uniform hypergraph, at stage \(i\) we append a low-degree subhypergraph \(H_i\) leading to a chosen state \(\sigma_i\), and use the corresponding missing-edge family \(M_i\) for the next stage after removing a few high-degree vertices. Suppose the final coloring contains a monochromatic \(k\)-set \(e\notin M_p\). At the first stage where \(e\) leaves the missing-edge family, another admissible block containing \(e\) could have produced the same memory state. Replacing that block therefore gives a valid input with the same output coloring but containing the monochromatic edge \(e\), contradicting correctness.

Hence every monochromatic \(k\)-set on the remaining vertices belongs to \(M_p\). Comparing the number of monochromatic \(k\)-sets forced by a coloring with few colors with the upper bound on \(|M_p|\) yields the desired lower bound. We choose \(p=\Theta(\sqrt{\log n/\log\log n})\) while keeping \(\Delta\) polylogarithmic in \(n\). Throughout this section, a \emph{block} is a consecutive portion of the stream containing one subhypergraph, with its edges presented in a fixed order, and an \emph{admissible} subhypergraph is one satisfying the stated maximum-degree restriction.
\begin{lemma}[Missing-edge lemma]\label{lem:missing-edge-compression}
Let $B$ be an $n$-vertex $k$-uniform hypergraph of maximum degree at most $d$, let $0<\rho\le1$, and let \(\mathcal D(B,\rho,d)\) denote the distribution of a random subhypergraph obtained by retaining each edge of \(B\) independently w.p. \(\rho\), conditioned on the resulting hypergraph having maximum degree at most \(2\rho d\). Suppose that $\rho d\ge 4\log(2n)$. Then, for any fixed initial memory state of an $S$-bit deterministic streaming algorithm, there exists a resulting state $\sigma$ such that
\[
|M(\sigma)|\le \frac{\ln 2\,(S+1)}{\rho},
\]
where $M(\sigma)$ is the set of edges of $B$ that occur in no sampled subhypergraph leading to $\sigma$.
\end{lemma}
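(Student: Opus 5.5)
The plan is a pigeonhole-plus-union-bound argument over final memory states, following Assadi, Chen, and Sun~\cite{assadi2022deterministic}. Let $\mathcal H$ be a random subhypergraph drawn from $\mathcal D(B,\rho,d)$, and let $\sigma(\mathcal H)$ be the state reached after the algorithm, started from the fixed initial state, reads the edges of $\mathcal H$ in the fixed block order. Since $\sigma(\mathcal H)$ takes at most $2^S$ values, some state $\sigma$ satisfies $\Prb[\sigma(\mathcal H)=\sigma]\ge 2^{-S}$. Fix this $\sigma$. By definition of $M(\sigma)$, the event $\sigma(\mathcal H)=\sigma$ implies $\mathcal H\cap M(\sigma)=\emptyset$. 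Hence
\[
2^{-S}\le \Prb_{\mathcal D}\bigl[\mathcal H\cap M(\sigma)=\emptyset\bigr].
\]
It remains to upper-bound this probability in terms of $|M(\sigma)|$.

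Without the conditioning, the edges are retained independently, so the unconditioned probability that none of the edges in $M(\sigma)$ is retained is $(1-\rho)^{|M(\sigma)|}\le e^{-\rho|M(\sigma)|}$. The conditioning event is $\mathcal G=\{\text{max degree}\le 2\rho d\}$, and conditioning on it inflates probabilities by at most a factor $1/\Prb[\mathcal G]$. I will show $\Prb[\mathcal G]\ge 1/2$. Each vertex has degree at most $d$ in $B$, so its retained degree is stochastically dominated by $\operatorname{Bin}(d,\rho)$ with mean at most $\rho d$. A Chernoff bound gives
\[
\Prb[\deg>2\rho d]\le e^{-\rho d/3}.
\]
Using $\rho d\ge 4\log(2n)$, this is at most $(2n)^{-4/3}\le 1/(2n)$, assuming natural logarithms. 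If $\log$ means base 2 the bound only improves. A union bound over the $n$ vertices then gives $\Prb[\neg\mathcal G]\le 1/2$.

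Combining the two bounds gives $2^{-S}\le 2e^{-\rho|M(\sigma)|}$, so
\[
\rho|M(\sigma)|\le \ln 2\,(S+1),
\]
which is the claim. An alternative is to note that $\mathcal G$ is a decreasing event, as is ``avoid $M(\sigma)$'', so Harris/FKG gives positive correlation. That only points the inequality the wrong way, so I will use the crude factor-$2$ bound.

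The main obstacle is the Chernoff constant. I need $e^{-\rho d/3}\cdot n\le 1/2$, which holds because $4/3>1$. If the paper's convention for the multiplicative Chernoff bound with $\delta=1$ is weaker, I will switch to the form $e^{-\delta^2\mu/(2+\delta)}=e^{-\rho d/3}$ applied to the dominating binomial with $\mu=\rho d$. The dominance step is needed because the actual mean may be below $\rho d$, and the bound is monotone in the mean.
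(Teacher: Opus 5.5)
Your proposal is correct and follows the paper's proof essentially step for step. Both arguments use the same three ingredients. First, pigeonhole over the at most $2^S$ states gives some $\sigma$ reached with probability at least $2^{-S}$. Second, the unconditioned probability of avoiding $M(\sigma)$ is $(1-\rho)^{|M(\sigma)|}$. Third, a Chernoff-plus-union-bound argument using $\rho d\ge 4\log(2n)$ shows the degree-conditioning event has probability at least $1/2$, so conditioning costs at most a factor of two and yields $2^{-S}\le 2e^{-\rho|M(\sigma)|}$.
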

\begin{proof}
Our proof is built upon Lemma~4.3 of \cite{assadi2022deterministic}, which is stated for graphs, but its proof does not use that an edge has two endpoints. Thus the argument applies without changing the state-counting or missing-edge estimates; we give the details below.

Let \(\widetilde H\) be obtained from \(B\) by retaining each edge independently w.p. \(\rho\), before conditioning. For every vertex \(v\), the random variable \(d_{\widetilde H}(v)\) is binomial with mean \(\rho d_B(v)\le \rho d\). Hence, by a Chernoff bound,
\(
\Prb\left[d_{\widetilde H}(v)>2\rho d\right]\le \exp(-\rho d/3)
\).
Since \(\rho d\ge 4\log(2n)\), a union bound over the \(n\) vertices gives
\[
\Prb\left[\Delta(\widetilde H)\le 2\rho d\right]\ge 1/2.
\]
Now fix the initial memory state. Let \(p_\sigma\) be the probability of reaching state \(\sigma\) under the conditioned distribution \(\mathcal D(B,\rho,d)\). Since there are at most \(2^S\) states, the pigeonhole principle gives one with \(p_\sigma\ge2^{-S}\).

Every subhypergraph leading to this state avoids all edges in \(M(\sigma)\). Before conditioning, independence of edge sampling makes the probability of this avoidance exactly \((1-\rho)^{|M(\sigma)|}\). Conditioning on an event of probability at least \(1/2\) can increase a probability by at most a factor of two. Therefore,
\[
2^{-S}\le p_\sigma\le 2(1-\rho)^{|M(\sigma)|}\le 2\exp\!\left(-\rho|M(\sigma)|\right).
\]
Thus
\(
    |M(\sigma)|\le \frac{\ln 2\,(S+1)}{\rho}
\).
\end{proof}

\begin{proof}[Proof of \Cref{thm:det-streaming-lb}]
Fix a deterministic algorithm using \(O(n\log^a n)\) bits, and let \(S:=\lceil C_0n\log^a n\rceil\) bound its memory for some constant \(C_0>0\). We choose the number of blocks \(p\) and the maximum degree \(\Delta\) as follows:
\[
p:=\left\lfloor\sqrt{\frac{\log n}{\log\log n}}\right\rfloor,
\qquad
\Delta:=\left\lceil C_{k,a}\log^{a+2}n\right\rceil,
\]
where \(C_{k,a}\) is sufficiently large. The \(i\)-th block consists of the edges of a \(k\)-uniform subhypergraph \(H_i\), which we choose after determining the state reached by the preceding blocks. This does not make the final input random or dependent on any algorithmic random choices: the algorithm is deterministic, so the construction produces one fixed worst-case stream.

Let \(V_1=V\), \(B_1=K_n^{(k)}\), and \(d_1=\binom{n-1}{k-1}\). At stage \(i\), the base hypergraph \(B_i\) contains the edges available for constructing the next block, and \(\Delta(B_i)\le d_i\). Suppose the algorithm is currently in state \(\sigma_{i-1}\). Set
\[
\rho_i:=\frac{\Delta}{2pd_i}.
\]
Apply \Cref{lem:missing-edge-compression} to subhypergraphs sampled from $\mathcal D(B_i,\rho_i,d_i)$ starting from $\sigma_{i-1}$. Choose a resulting state $\sigma_i$ whose missing-edge set $M_i$ satisfies
\[
|M_i|\le \frac{\ln 2\,(S+1)}{\rho_i},
\]
and append any admissible subhypergraph $H_i$ leading to $\sigma_i$. Then $\Delta(H_i)\le 2\rho_i d_i=\Delta/p$.

Define
\[
d_{i+1}:=\frac{2k\ln 2\,(S+1)p}{\rho_i n},
\qquad
V_{i+1}:=\{v\in V_i:d_{M_i}(v)\le d_{i+1}\},
\qquad
B_{i+1}:=\{e\in M_i:e\subseteq V_{i+1}\}.
\]
By the definition of $V_{i+1}$, we have $\Delta(B_{i+1})\le d_{i+1}$, so the maximum-degree invariant needed in the next stage is preserved.
Since every removed vertex has $M_i$-degree greater than $d_{i+1}$,
\[
|V_i\setminus V_{i+1}|d_{i+1}\le k|M_i|\le \frac{k\ln 2\,(S+1)}{\rho_i}=\frac{n}{2p}d_{i+1}.
\]
Thus $|V_i\setminus V_{i+1}|\le n/(2p)$ and hence $|V_{p+1}|\ge n/2$. Moreover,
\[
B_{i+1}\subseteq M_i\subseteq B_i,
\]
so every later base hypergraph lies inside every earlier missing-edge set. Moreover, $H_i\cap M_i=\emptyset$ by the definition of $M_i$. Hence, whenever $i<j$, we have $H_j\subseteq B_j\subseteq M_i$ but $H_i\cap M_i=\emptyset$. Thus $H_1,\ldots,H_p$ are pairwise edge-disjoint, and their union has maximum degree at most $\sum_{i=1}^p\Delta(H_i)\le\Delta$.

Let $\chi$ be the coloring output from the final state $\sigma_p$, using $q$ colors. We claim that every monochromatic $k$-set $e\subseteq V_{p+1}$ lies in $M_p$. Otherwise, since $B_1$ is complete and $e\subseteq V_{p+1}$, whenever $e\in M_i$ we also have $e\in B_{i+1}$; hence there is a first $i$ with $e\in B_i\setminus M_i$. By the definition of $M_i$, there exists an admissible subhypergraph $H_i'$ containing $e$ that also takes $\sigma_{i-1}$ to $\sigma_i$. Replacing $H_i$ by $H_i'$ therefore leaves the final memory state, and hence $\chi$, unchanged. The modified stream remains valid. Indeed, for every $j<i$, we have $H_i'\subseteq B_i\subseteq M_j$ and $H_j\cap M_j=\emptyset$, so $H_i'$ is disjoint from all earlier blocks. Also, $H_i'\cap M_i=\emptyset$, whereas every later block lies in $M_i$, so it is disjoint from all later blocks. Finally, $\Delta(H_i')\le\Delta/p$, so the union still has maximum degree at most $\Delta$. Thus the modified input contains the monochromatic edge $e$, a contradiction.

There are at least \(n/2\) surviving vertices. 
If \(q\le n/(4k)\), we have
\begin{align*}
    \sum_{c=1}^q\binom{|\chi^{-1}(c)\cap V_{p+1}|}{k}&\ge q\binom{\lfloor n/(2q)\rfloor}{k}\tag{convexity}\\
    &\ge q\cdot \frac{(n/4q)^k}{k!}=\frac{n^k}{4^kk!q^{k-1}}.\tag{$ n/(2q)\ge 2k$}
\end{align*}
All these monochromatic $k$-sets lie in $M_p$, and therefore
\[
\frac{n^k}{4^kk!q^{k-1}}\le |M_p|\le \frac{\ln 2\,(S+1)}{\rho_p}.
\]
To use this inequality, we estimate how small the last missing-edge family is. Substituting the definition of \(\rho_i\) into the recurrence for \(d_{i+1}\) gives
\[
d_{i+1}=\frac{4k\ln 2\cdot p^2(S+1)}{n\Delta}d_i,\qquad d_1=\binom{n-1}{k-1}\le n^{k-1}
\]
so
\[
d_p\le n^{k-1}\left(\frac{4k\ln 2\cdot p^2(S+1)}{n\Delta}\right)^{p-1},
\qquad
\rho_p\ge \frac{\Delta}{2pn^{k-1}}\left(\frac{n\Delta}{4k\ln 2\cdot p^2(S+1)}\right)^{p-1}.
\]
Substitution yields
\[
q\ge \left(\frac{n\Delta}{C_kp^2(S+1)}\right)^{p/(k-1)},
\]
where $C_k$ is a constant depending on $k$.
Since $S=O(n\log^a n)$ and $\Delta=\Theta(\log^{a+2}n)$,
\[
\frac{n\Delta}{C_kp^2(S+1)}=\Omega_{k,a,C_0}(\log n\log\log n),
\]
and consequently
\[
\log q=\Omega_{k,a}\!\left(\sqrt{\log n\log\log n}\right).
\]
Taking, for example, any fixed $c<1/(2(a+2))$, we have $\Delta^c=o(\sqrt{\log n\log\log n})$, and hence $q\ge \exp(\Delta^c)$ for all sufficiently large $n$. 

If instead \(q>n/(4k)\), the desired bound already follows because \(\exp(\Delta^c)=n^{o(1)}\).

Finally, the repeated applications of \Cref{lem:missing-edge-compression} require \(\rho_i d_i\ge4\log(2n)\) and \(\rho_i\le1\). The first holds because \(\rho_i d_i=\Delta/(2p)\gg\log n\). For the second, observe that
\[
\frac{d_{i+1}}{d_i}=\Theta_{k,a,C_0}\!\left(\frac{1}{\log n\log\log n}\right).
\]
Thus $d_i=n^{k-1-o(1)}\gg\Delta/p$ for all $i\le p$, so $\rho_i\le1$. This completes the proof.
\end{proof}




\section*{AI disclosure}
We used ChatGPT to assist with paper writing, to quickly test intuitions, and to check for typos. It found that the lower bound for palette sparsification was already established in prior literature. It also helped establish the deterministic one-pass lower bound. The authors take full responsibility for all statements and proofs.

\newpage
\phantomsection\addcontentsline{toc}{section}{References}
\bibliographystyle{alpha}
\bibliography{References}

\newpage
\appendix

\section{Proof of concentration}
\label{sec:proof-of-concentration}

\begin{proof}[Proof of \Cref{lem:literal-loads}]
Fix \(v,c,s\) and condition on \(c\in L(v)\).  For every \(u\neq v\), let
\[
    Y_{u,c}:=\mathbf 1[c\in L(u)].
\]
Then the variables \(Y_{u,c}\) are mutually independent over \(u\neq v\), with
\[
    \Prb[Y_{u,c}=1]=\frac{\ell}{Q}.
\]
Write the edge count as
\[
    X_s(v,c):=e(F_{s,v,c})
    =\sum_{B\in E(F_{s,v})}\prod_{u\in B}Y_{u,c}.
\]
For each edge \(B\), the product is one exactly when all its vertices have \(c\) available. Assumption~\ref{ass:residual-hierarchy} gives
\[
    \max_{|T|=a}d_{F_{s,v}}(T)\le \Gamma D^{s-1-a},
    \qquad 1\le a\le s-1,
\]
and 
\[
e(F_{s,v})\le D^{s-1}.
\]  
Hence
\[
    \E\left[\Lambda^{1-s}X_s(v,c)\right]
    \le
    \Lambda^{1-s}D^{s-1}\left(\frac{\ell}{Q}\right)^{s-1}
    =
    \left(\frac{D\ell}{\Lambda Q}\right)^{s-1}.
\]

We next bound the \(h\)-th moment for an integer \(h\ge2\). Expanding \(X_s(v,c)^h\) gives a sum over ordered \(h\)-tuples of auxiliary edges, with repetitions allowed. Since each indicator satisfies \(Y_{u,c}^j=Y_{u,c}\) for \(j\ge1\), the expectation of a term depends only on the union of its edges. Fix the first \(h-1\) edges
\[
    B_1,\ldots,B_{h-1}\in E(F_{s,v}),
\]
and put
\[
    U:=\bigcup_{i=1}^{h-1}B_i,
    \qquad |U|\le(h-1)(s-1).
\]
If the final edge is \(B\), the only new vertices are those in \(B\setminus U\). Its extra factor in the expectation is therefore \((\ell/Q)^{(s-1)-|B\cap U|}\). We sum this factor by considering the size \(a\) of the intersection with \(U\). For each \(a\ge1\), there are \(\binom{|U|}{a}\) possible intersection sets, and each is contained in at most \(\Gamma D^{s-1-a}\) auxiliary edges. Counting all extensions of each such set gives an upper bound:
\begin{align*}
    \sum_{B\in E(F_{s,v})}\left(\frac{\ell}{Q}\right)^{(s-1)-|B\cap U|}&\le D^{s-1}\left(\frac{\ell}{Q}\right)^{s-1}+\Gamma\sum_{a=1}^{s-1}
    \binom{|U|}{a}D^{s-1-a}\left(\frac{\ell}{Q}\right)^{s-1-a}\\
    &=\left(\frac{D\ell}{Q}\right)^{s-1}
    \left[1+\Gamma\sum_{a=1}^{s-1}\binom{|U|}{a}\left(\frac{Q}{D\ell}\right)^a\right]\\
    &\le\left(\frac{D\ell}{Q}\right)^{s-1}\left[1+\Gamma\left(\left(1+\frac{Q}{D\ell}\right)^{|U|}-1\right)\right]\\
    &\le\left(\frac{D\ell}{Q}\right)^{s-1}\left[1+\Gamma\left(\left(1+\frac{Q}{D\ell}\right)^{(h-1)(s-1)}-1\right)\right].
\end{align*}
The bound holds regardless of the first \(h-1\) edges. Applying it successively to the expansion gives
\[
\E\left[\left(\Lambda^{1-s}X_s(v,c)\right)^h\right]
\le
\left(\frac{D\ell}{\Lambda Q}\right)^{h(s-1)}
\left[
1+\Gamma\left(
\left(1+\frac{Q}{D\ell}\right)^{(h-1)(s-1)}-1
\right)
\right]^h.
\]
Take
\[
h:=\left\lfloor\frac{\ell}{320\Gamma}\right\rfloor,
\]
which is at least \(2\) for sufficiently large \(\ell\). Since
\[
\frac{D\ell}{\Lambda Q}=\frac12
\qquad\text{and}\qquad
\frac{Q}{D\ell}=\frac{80}{\ell},
\]
we have
\[
\left(1+\frac{Q}{D\ell}\right)^{(h-1)(s-1)}
\le
\exp\left(\frac{80h(s-1)}{\ell}\right)
\le
e^{(s-1)/(4\Gamma)}.
\]
Moreover, since $\Gamma\ge 1$, by Bernoulli's inequality we have \(1+\Gamma t\le (1+t)^{\Gamma}\) for \(t\ge0\). Hence
\[
1+\Gamma\left(e^{(s-1)/(4\Gamma)}-1\right)\le e^{(s-1)/4}.
\]
Therefore
\[
\E\left[\left(\Lambda^{1-s}X_s(v,c)\right)^h\right]
\le
\left(\frac{e^{1/4}}{2}\right)^{h(s-1)}.
\]
The base \(\beta:=e^{1/4}/2\) is less than one, so this moment decreases exponentially with \(h(s-1)\). Markov's inequality gives
\[
\Prb\left[X_s(v,c)>\Lambda^{s-1}\,\middle|\,c\in L(v)\right]
\le
\beta^{h(s-1)}.
\]
A union bound then gives
\[
\Prb[c\in\widetilde{\mathrm{Bad}}(v)\mid c\in L(v)]
\le
\sum_{s=2}^k\beta^{h(s-1)}
\le
\frac{\beta^h}{1-\beta^h}
=
e^{-\Omega_{\Gamma}(\ell)}.
\qedhere
\]
\end{proof}

\section{Proof of streaming coloring for bounded-rank hypergraphs}\label{sec:bounded_rank_proof}

We group input edges by size and apply the uniform construction to each group, using the same color lists. We then simplify all the resulting constraints together. There are only a fixed number of edge sizes, so combining the space bounds and the auxiliary codegree bounds changes only constants depending on \(r\) and \(k\).

\begin{proof}[Proof of \Cref{cor:bounded-rank}]
Fix sufficiently large constants \(A_{r,k},K_{r,k},R_{r,k}\) depending only on \(r\) and \(k\), and set
\[
Q:=\left\lceil A_{r,k}\Delta^{1/(r-1)}\right\rceil,
\qquad
\ell:=\left\lceil K_{r,k}\sqrt{\log n}\right\rceil.
\]

For \(r\le q\le k\), let \(E_q:=\{e\in E:|e|=q\}\) and let \(d_q(S):=|\{e\in E_q:S\subseteq e\}|\) count the \(q\)-vertex edges containing \(S\). We use the storage and sampling procedure from \Cref{thm:uniform} for each group \(E_q\), with the global palette and lists chosen above. When a \(q\)-vertex edge contains an active \(s\)-vertex set \(S\), \(2\le s<q\), we count this occurrence w.p.
\[
\rho_{q,s}:=\frac{R_{r,k}\log n}{Q^{q-s}},
\]
maintaining separate counters \(C_{q,S}\) for different input edge sizes \(q\). Let \(\widehat{\calP}^{(q)}_s\) be the resulting candidate family. After the stream, combine the candidate constraints from all groups with the original full-edge constraints and process them in increasing order of set size. 
Thus a constraint on a smaller set can remove a larger constraint.


Correctness follows as before: each original edge contributes its full-edge constraints, and a constraint is discarded only when a smaller one for the same color has already been kept.

Keeping separate counters for each edge size changes the total by only a factor depending on \(r,k\).  For every \(r\le q\le k\), the expected number of full-edge constraints is
\[
O_{r,k}\left(n\Delta\frac{\ell^q}{Q^{q-1}}\right)=\widetilde O_{r,k}(n),
\]
and, for every \(2\le s<q\), the expected number of sampled active \((q,s)\)-core occurrences is
\[
\binom{q}{s}|E_q|
\cdot
\frac{\ell^s}{Q^{s-1}}
\cdot
\frac{R_{r,k}\log n}{Q^{q-s}}
\le
O_{r,k}\!\left(
n\Delta\frac{\ell^s\log n}{Q^{q-1}}
\right)
=
\widetilde O_{r,k}(n),
\]
since \(q\ge r\) and \(Q^{r-1}=\Theta_{r,k}(\Delta)\). The remaining space argument is identical to \Cref{lem:space}. 

Next we combine the auxiliary hypergraphs for the different input edge sizes. Their union will bound all the constraints involving a given vertex and color. For fixed \(v,s\) and \(q\ge\max\{r,s\}\), define \(F^{(q)}_{s,v}\) as in \Cref{sec:loads}, replacing \(d_H\), \(k\), and \(Q^{k-s}\) by \(d_q\), \(q\), and \(Q^{q-s}\), respectively, and let
\[
F_{s,v}:=\bigcup_{q=\max\{r,s\}}^k F^{(q)}_{s,v}.
\]
The double-counting argument of \Cref{lem:streaming-residual-hierarchy} gives, for \(1\le a\le s-2\),
\[
e(F_{s,v})\le C_{r,k}\frac{\Delta}{Q^{r-1}}Q^{s-1},
\qquad
\max_{|T|=a}d_{F_{s,v}}(T)\le C_{r,k}Q^{s-a-1},
\]
for some constant \(C_{r,k}\). Thus, by choosing the constant in \(Q=\Theta_{r,k}(\Delta^{1/(r-1)})\) sufficiently large,
\[
e(F_{s,v})\le D^{s-1},
\qquad
\max_{|T|=a}d_{F_{s,v}}(T)\le \Gamma_{r,k}D^{s-1-a}
\]
for some constant \(\Gamma_{r,k}\); the case \(a=s-1\) is trivial. Hence Assumption~\ref{ass:residual-hierarchy} holds. Every kept \(s\)-core constraint \(B_{S,c}\) arises from some \(q\) with \(S\in\widehat{\calP}^{(q)}_s\). Applying \Cref{lem:active-core-detection} separately to every \((q,s)\) and taking a union bound gives a joint detection event with the same high-probability guarantee given any sampled list assignment $L$. On the joint detection event, the same argument as in \Cref{lem:domination} gives
\[
S\setminus\{v\}\in E(F^{(q)}_{s,v,c})\subseteq E(F_{s,v,c}).
\]
The same argument applies to kept full-edge constraints with \(q=s\). Hence
\[
N_s(v,c)\le e(F_{s,v,c}),
\]
on the joint detection event.

Thus the auxiliary edge counts bound the local constraint counts, just as in the uniform case. We can now remove problematic colors and apply the final coloring step. By \Cref{lem:palette-trimming}, w.h.p. every vertex has at least \(\lfloor\ell/2\rfloor\) colors outside \(\widetilde{\mathrm{Bad}}(v)\). When detection succeeds for all edge sizes, the comparison above gives \(\mathrm{Bad}(v)\subseteq\widetilde{\mathrm{Bad}}(v)\). We therefore arbitrarily choose a sublist \(G(v)\subseteq L(v)\setminus \mathrm{Bad}(v)\) of size \(\lfloor\ell/2\rfloor\). Then \Cref{lem:lll-verification} gives an assignment avoiding all kept bad events, and \Cref{lem:kernel-correct} shows that it properly colors the original hypergraph.

As in the proof of \Cref{thm:uniform}, we cap the memory of each copy at a constant multiple of its expected-space bound, run \(O(\log n)\) independent copies, and process the surviving copies sequentially. This gives the claimed \(\widetilde O_{r,k}(n)\) space bound and success probability, with expected polynomial postprocessing time.

Finally, if the palette is too small or one of the displayed sampling probabilities exceeds one, the maximum degree is bounded by a fixed power of \(\log n\). As in the proof of \Cref{thm:uniform}, we then store the entire hypergraph and color it directly. 
\end{proof}

\end{document}